\documentclass[11pt,a4paper]{article}

\usepackage{titlesec}
\usepackage{fancyhdr}
\usepackage{a4wide}
\usepackage{graphicx}
\usepackage{float}
\usepackage{amssymb}
\usepackage{amsmath}
\usepackage{amsthm}
\usepackage{color}
\usepackage{mathrsfs}
\usepackage{array}
\usepackage{eucal}
\usepackage{tikz}
\usepackage[T1]{fontenc}
\usepackage{inputenc}
\usepackage[english]{babel}
\usepackage{lmodern}
\usepackage{hyperref}
\usepackage{geometry}
\usepackage{changepage}
\geometry{hmargin=2.2cm, vmargin=2.2cm }
\changepage{0pt}{}{}{}{}{0pt}{}{0pt}{10pt}
\usepackage[numbers]{natbib}
\setlength{\bibsep}{0.0pt}

\usepackage{hyperref}
\hypersetup{
pdfpagemode=none,
pdftoolbar=true,        
pdfmenubar=true,        
pdffitwindow=false,     
pdfstartview={Fit},    
pdftitle={Exact zero transmission during the Fano resonance phenomenon in non symmetric waveguides},    
pdfauthor={L. Chesnel, S.A. Nazarov},     
pdfsubject={},  
pdfcreator={L. Chesnel, S.A. Nazarov},   
pdfproducer={L. Chesnel, S.A. Nazarov}, 
pdfkeywords={}, 
pdfnewwindow=true,      
colorlinks=true,       
linkcolor=magenta,          
citecolor=red,        
filecolor=cyan,      
urlcolor=blue           
}

\newcommand{\dsp}{\displaystyle}

\newcommand{\eps}{\varepsilon}

\newcommand{\Om}{\Omega}
\newcommand{\mrm}[1]{\mathrm{#1}}

\newcommand{\Cplx}{\mathbb{C}}

\newcommand{\R}{\mathbb{R}}

\newcommand{\mL}{\mrm{L}}
\newcommand{\mH}{\mrm{H}}

\newcommand{\tr}{\mrm{tr}}

\newtheorem{theorem}{Theorem}[section]

\newtheorem{remark}{Remark}[section]

\newtheorem{proposition}{Proposition}[section]

\begin{document}

~\vspace{0.0cm}
\begin{center}
{\sc \bf\LARGE  
Exact zero transmission during the Fano resonance\\[6pt] phenomenon in non symmetric waveguides}
\end{center}

\begin{center}
\textsc{Lucas Chesnel}$^1$, \textsc{Sergei A. Nazarov}$^{2}$\\[16pt]
\begin{minipage}{0.95\textwidth}
{\small
$^1$ INRIA/Centre de mathématiques appliquées, \'Ecole Polytechnique, Université Paris-Saclay, Route de Saclay, 91128 Palaiseau, France;\\
$^2$ St. Petersburg State University, Universitetskaya naberezhnaya, 7-9, 199034, St. Petersburg, Russia;\\[10pt]
E-mails: \texttt{lucas.chesnel@inria.fr}, \texttt{srgnazarov@yahoo.co.uk}, \texttt{s.nazarov@spbu.ru} \\[-14pt]
\begin{center}
(\today)
\end{center}
}
\end{minipage}
\end{center}
\vspace{0.4cm}

\noindent\textbf{Abstract.} 
We investigate a time-harmonic wave problem in a waveguide. We work at low frequency so that only one mode can propagate. It is known that the scattering matrix exhibits a rapid variation for real frequencies in a vicinity of a complex resonance located close to the real axis. This is the so-called Fano resonance phenomenon. And when the geometry presents certain properties of symmetry, there are two different real frequencies such that we have either $R=0$ or $T=0$, where $R$ and $T$ denote the reflection and transmission coefficients. In this work, we prove that without the assumption of symmetry of the geometry, quite surprisingly, there is always one real frequency for which we have $T=0$. In this situation, all the energy sent in the waveguide is backscattered. However in general, we do not have $R=0$ in the process. We provide numerical results to illustrate our theorems.\\

\noindent\textbf{Key words.} Waveguides, Fano resonance, zero transmission, scattering matrix.

\section{Introduction}\label{Introduction}

The Fano resonance is a universal phenomenon in physics which appears in many areas. For a general presentation, we refer the reader to \cite{Fano61} for the seminal paper and to \cite{MiFK10,LZMHN10} for recent reviews. In this work, we consider its expression on a model problem of propagation of time-harmonic waves in a waveguide which is unbounded in one direction. This problem appears naturally for instance in acoustics, in water-waves theory or in electromagnetism. In this context, the Fano resonance mechanism can be described as follows. Assume that the Neumann Laplacian (for the problem we consider below) has a real eigenvalue $\lambda^0$ embedded in the continuous spectrum. In this case, the corresponding eigenfunctions are the so-called trapped modes which are exponentially decaying at infinity. Then perturbing slightly the setting, for example the geometry or the material index, in general this real eigenvalue will turn into a complex resonance \cite{AsPV00,Zwor99,Naza18}. And for real spectral parameters $\lambda$ (proportional to the square of the frequency) varying in a neighbourhood of $\lambda^0$, the scattering matrix will exhibit a rapid variation. This variation is even quicker as the imaginary part of the complex resonance is small. When $\lambda^0$ is between the first and the second thresholds in the continuous spectrum, so that only two conjugated waves can propagate in the waveguide, the symmetric scattering matrix is composed of two reflection coefficients $R$, $\tilde{R}$ and one transmission coefficient $T$ (see the notation in (\ref{defusca})). In this case, under certain properties of symmetry of the configuration, one can show that the scattering coefficients take zero values for some real $\lambda$ around $\lambda^0$. Such particular values for $R$, $\tilde{R}$ are studied in particular in the context of Perfect Transmission Resonances (PTRs), see e.g. \cite{Shao94,PoGP99,LeKi01,Zhuk10,MrMK11}. For the presentation of simple models in optics explaining the Fano resonance phenomenon, we refer the reader to \cite{FaJo02,FaSJ03}. For more mathematical approaches, one can consult \cite{ShVe05,ShTu12,ShWe13,AbSh16,ChNa18}. For computations of complex resonances and numerical investigations of the Fano resonance phenomenon in waveguides, we refer the reader to \cite{DKLM07,CaLo07,EMADAD08,HeKo08,HoNa09,HeKN12}. For results concerning the existence of trapped modes associated with eigenvalues embedded in the continuous spectrum, see e.g. \cite{Urse51,Evan92,EvLV94,DaPa98,LiMc07,Naza10c,Naza13Gaps,Naza17}. Finally, note that another approach to get rigorously a zero transmission coefficient can be found in \cite{ChNPSu,ChPaSu,ChPa19}.\\
\newline
The goal of this note is to show that without assumption of symmetry of the configuration, the transmission coefficient $T$ still takes the zero value throughout the Fano resonance phenomenon. This was intuited in \cite{Lee99}  using a continuation idea from a symmetric setting. In the present work, we prove rigorously the result using a different approach which does not require to start from a symmetric setting. The outline of the article is as follows. First we present the setting in Section \ref{SectionSetting}. Then we perturb the geometry and the frequency of the configuration supporting trapped modes via a small parameter $\eps>0$ and we recall the results of \cite{ChNa18} providing an asymptotic expansion of the scattering matrix with respect to $\eps$ tending to zero. In Section \ref{sectionAsympto}, we show that miraculously (we have no physical explanation for that), the main asymptotic term in the expansion of the transmission coefficient passes through zero for real $\lambda$ around $\lambda^0$. Then in Section \ref{SectionTNull}, working as in \cite{ChPa19}, we demonstrate that the unitary structure of the scattering matrix is enough to deduce that the transmission coefficient itself passes through zero for real $\lambda$ around $\lambda^0$.  We provide some numerical results to illustrate this analysis in Section \ref{SectionNumerics}. Finally, we give short concluding remarks. The main result of this work is Theorem \ref{MainThmPart1}. 

\section{Setting}\label{SectionSetting}

\begin{figure}[!ht]
\centering
\begin{tikzpicture}[scale=2]
\draw[fill=gray!30,draw=none](-1.5,0) rectangle (1.5,1);
\begin{scope}[scale=0.5]
\draw [fill=white,draw=black] plot [smooth cycle, tension=1] coordinates {(-0.6,0.9) (0,0.5) (0.7,1) (0.5,1.5) (-0.2,1.4)};
\end{scope}
\draw (-1.5,1)--(1.5,1);
\draw (-1.5,0)--(1.5,0);
\draw[dashed] (2,1)--(1.5,1);
\draw[dashed] (-2,1)--(-1.5,1);  
\draw[dashed] (2,0)--(1.5,0);
\draw[dashed] (-2,0)--(-1.5,0);  
\node at (0.8,0.15){\small $\Om$};
\begin{scope}[xshift=-6cm,yshift=-1cm]
\draw[->] (3,1.2)--(3.6,1.2);
\draw[->] (3.1,1.1)--(3.1,1.7);
\node at (3.65,1.3){\small $x$};
\node at (3.25,1.6){\small $y$};
\end{scope}
\node[above] at (-1,-0.5){\small $d$};
\draw (-1,-0.1)--(-1,0.1);
\node[above] at (1,-0.5){\small $-d$};
\draw (1,-0.1)--(1,0.1);
\end{tikzpicture}
\caption{Example of geometry $\Om$. \label{DomainOriginal2D}} 
\end{figure}
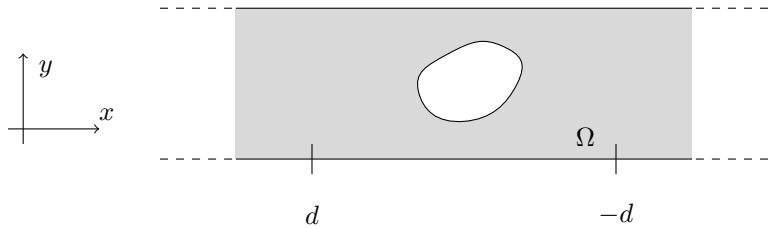

\noindent \noindent Let $\Om\subset\R^2$ be a domain, that is a connected open set, with Lipschitz boundary  $\partial\Om$ which coincides with the reference strip 
\[
\{ (x,y)\in\R\times(0;1)\}
\]
for $|x|\ge d$ where $d>0$ is fixed (see Figure \ref{DomainOriginal2D}). We assume that the propagation of time-harmonic waves in $\Om$ is governed by the Helmholtz equation with Neumann boundary conditions
\begin{equation}\label{PbInitial}
\begin{array}{|rcll}
\Delta u + \lambda u & = & 0 & \mbox{ in }\Om\\[3pt]
 \partial_{\nu}u  & = & 0  & \mbox{ on }\partial\Om. 
\end{array}
\end{equation}
In this problem, $u$ is the quantity of interest (acoustic pressure, velocity potential, component of the electromagnetic field,...), $\Delta$ denotes the 2D Laplace operator, $\lambda$ is a parameter which is proportional to the square of the frequency and $\nu$ stands for the normal unit vector to $\partial\Om$ directed to the exterior of $\Om$. Note that from time to time, abusively we will call $\lambda$ the frequency. We emphasize that we consider an academic $\mrm{2D}$ problem only to simplify the presentation. Other configurations can be dealt with in a completely similar way. In particular, the analysis is the same in higher dimension and in waveguides for which the two unbounded branches are not aligned. Moreover, we can also impose Dirichlet or periodic boundary conditions in (\ref{PbInitial}) to study quantum waveguides or gratings. For $\lambda\in(0;\pi^2)$, only the plane waves $w_{\pm}$ defined by
\begin{equation}\label{DefPlaneWaves}
w_{\pm}(x,y)=e^{\pm ix\sqrt{\lambda} }
\end{equation}
can propagate in $\Om$. For $\lambda\in(0;\pi^2)$, the problem (\ref{PbInitial}) has solutions $u_{\pm}$ admitting the decompositions 
\begin{equation}\label{defusca}
u_+=\begin{array}{|ll}
w_++R_+\,w_-+\dots,& \mbox{ for }x<-d\\
\hspace{1.1cm}T\ w_++\dots,& \mbox{ for }x>d,
\end{array}\qquad\qquad u_-=\begin{array}{|ll}
\hspace{1cm}T\ w_-+\dots,& \mbox{ for }x<-d\\
w_-+R_-\,w_++\dots,& \mbox{ for }x>d.
\end{array}
\end{equation}
Here $R_{\pm}\in\Cplx$ are reflection coefficients and $T\in\Cplx$, which is the same both for $u_+$ and $u_-$ due to the reciprocity relation, is the transmission coefficient. Moreover, the dots stand for remainders in $\mH^1(\Om)$ which decay as $O(e^{-|x|\sqrt{\pi^2-\lambda}})$ when $|x|\to+\infty$. Physically, $u_+$ (resp. $u_-$) models the scattering of the incident rightgoing wave $w_+$ (resp. leftgoing wave $w_-$) by the perturbation in the geometry with respect to the reference strip $\R\times(0;1)$. We define the scattering matrix
\[
\mathfrak{s}:=\left( \begin{array}{cc}
R_+ & T \\
T & R_- \\
\end{array}\right)\in\Cplx^{2\times2}.
\]
It is a classical exercise to show that $\mathfrak{s}$ is unitary ($\mathfrak{s}\overline{\mathfrak{s}}^{\top}=\mrm{Id}$) and symmetric ($\mathfrak{s}=\mathfrak{s}^{\top}$). The functions $u_{\pm}$ are uniquely defined if and only if trapped modes (non-zero solutions of (\ref{PbInitial}) which are in $\mL^2(\Om)$) do not exist at the chosen $\lambda$. If trapped modes exist, we define uniquely $u_{\pm}$ as the functions admitting the expansions (\ref{defusca}) and which are orthogonal to the linear space of trapped modes (which is of finite dimension) in $\mL^2(\Om)$.\\
\newline
We assume that the geometry $\Om$ is such that $\lambda=\lambda^0\in(0;\pi^2)$ is a simple eigenvalue of the Neumann Laplacian. In other words, we assume there is a non zero $u_{\tr}\in\mL^2(\Om)$ satisfying $\Delta u_{\tr} + \lambda^0 u_{\tr}  = 0$ in $\Om$, $ \partial_{\nu}u_{\tr}  =  0$ on $\partial\Om$ and that any $\mL^2$ solution of (\ref{PbInitial}) is proportional to $u_{\tr}$. Note that since the continuous spectrum of the Neumann Laplacian in $\Om$ is $\sigma_c=[0;+\infty)$, the eigenvalue is embedded in $\sigma_c$. To set ideas, we impose that $\|u_{\tr}\|_{\mL^2(\Om)}=1$. Using decomposition in Fourier series, we obtain the expansion 
\begin{equation}\label{DefTrappedMode}
u_{\tr}= K\,e^{-x\sqrt{\pi^2-\lambda^0} }\cos(\pi y)+\tilde{u}_{\tr}\quad\mbox{ for } x\ge d,
\end{equation}
where $K$ is a constant and $\tilde{u}_{\tr}$ is a remainder which decays as $O(e^{-x\sqrt{4\pi^2-\lambda^0} })$ when $x\to+\infty$. We assume that $u_{\tr}$ has a slow decay as $x\to+\infty$, i.e. $K\ne0$. In case $K=0$, the analysis below must be adapted but can be done. Without lost of generality, we can impose that $K>0$. Note that the choice of making an assumption on the decay of $u_{\tr}$ as $x\to+\infty$ is arbitrary. Considering the change $x\mapsto-x$, the analysis below can be developed completely similarly imposing the behaviour as $x\to -\infty$. 

\section{Perturbation of the frequency and of the geometry}\label{SectionPerturbed}

\begin{figure}[!ht]
\centering
\begin{tikzpicture}[scale=2]
\draw[fill=gray!30,draw=none](-1.5,0) rectangle (1.5,1);
\begin{scope}[scale=0.5]
\draw [fill=white,draw=black] plot [smooth cycle, tension=1] coordinates {(-0.6,0.9) (0,0.5) (0.7,1) (0.5,1.5) (-0.2,1.4)};
\end{scope}
\draw (-1.5,1)--(-0.75,1);
\draw (-0.25,1)--(1.5,1);
\draw (-1.5,0)--(1.5,0);
\draw[dashed] (2,1)--(1.5,1);
\draw[dashed] (-2,1)--(-1.5,1);  
\draw[dashed] (2,0)--(1.5,0);
\draw[dashed] (-2,0)--(-1.5,0);  
\node at (0.8,0.15){\small $\Om^{\eps}$};
\node[above] at (-1,-0.5){\small $d$};
\draw (-1,-0.1)--(-1,0.1);
\node[above] at (1,-0.5){\small $-d$};
\draw (1,-0.1)--(1,0.1);
\draw[samples=30,domain=-0.25:0.25,draw=black,fill=gray!30] plot(\x-0.5,{1+0.05*(4*\x+1)^4*(4*\x-1)^4*(4*\x+3)});
\node[above] at (0.5,1){\footnotesize  $\partial \Om^{\eps}=(x,1+\eps H(x))$};
\end{tikzpicture}
\caption{Example of perturbed waveguide $\Om^{\eps}$. \label{PerturbedHalfWaveguide}} 
\end{figure}

\noindent Now, we perturb slightly the original setting supporting trapped modes. First, the spectral parameter $\lambda^0$ is changed for
\begin{equation}\label{defPerturbFreq}
\lambda^{\eps}=\lambda^0+\eps\lambda'
\end{equation}
where $\lambda'\in\R$ is given and $\eps>0$ is small. Second, we make a perturbation of amplitude $\eps$ of the geometry to change $\Om$ into some new waveguide $\Om^{\eps}$. More precisely, consider $\gamma\subset\partial\Om$ a smooth arc. In a neighbourhood $\mathscr{V}$ of $\gamma$, we introduce natural curvilinear coordinates $(n,s)$ where $n$ is the oriented distance to $\gamma$ such that $n>0$ outside $\Om$ and $s$ is the arc length on $\gamma$. Additionally, let $H\in\mathscr{C}^{\infty}_0(\gamma)$ be a smooth profile function which vanishes in a neighbourhood of the two endpoints of $\gamma$. Outside $\mathscr{V}$, we assume that $\partial\Om^{\eps}$ coincides with $\partial\Om$ and inside $\mathscr{V}$,  $\partial\Om^{\eps}$ is defined by the equation
\begin{equation}\label{DefGeom}
n(s)=\eps H(s).
\end{equation}
In other words if $\gamma$ is parametrized as $\gamma=\{P(s)\in\R^2\,|\,s\in I\}$ where $I$ is a given interval of $\R$, then $\gamma^{\eps}:=\{P(s)+\eps H(s)\nu(s)\,|\,s\in I\}$. Here $\nu(s)$ is the unit vector normal to $\gamma$ at point $P(s)$ directed to the exterior of $\Om$.  Finally we consider the perturbed problem 
\begin{equation}\label{PbInitialHalfPerturbed}
\begin{array}{|rcll}
\Delta u^{\eps} + \lambda^{\eps} u^{\eps} & = & 0 & \mbox{ in }\Om^{\eps}\\[3pt]
 \partial_{\nu^{\eps}}u^{\eps}  & = & 0  & \mbox{ on }\partial\Om^{\eps},
\end{array}
\end{equation}
where $\nu^{\eps}$ stands for the normal unit vector to $\partial\Om^{\eps}$ directed to the exterior of $\Om^{\eps}$. We denote by 
\[
\mathfrak{s}(\eps,\lambda),\qquad T(\eps,\lambda),\qquad R_+(\eps,\lambda),\qquad R_-(\eps,\lambda)
\]
the scattering parameters introduced in the previous section in the geometry $\Om^{\eps}$ at frequency $\lambda$. And for short, we set 
\[
\mathfrak{s}^0:=\mathfrak{s}(0,\lambda^0),\qquad T^0:=T(0,\lambda^0),\qquad R_+^0:=R_+(0,\lambda^0),\qquad R_-^0:=R_-(0,\lambda^0).
\]
To recall the Theorem 5.1 of \cite{ChNa18} describing the behaviour of the scattering matrix $\mathfrak{s}(\eps,\lambda^0+\eps\lambda')$ as $\eps$ goes to zero, and which will be the basis of our analysis below, we need to introduce a few quantities. Set $U:=(u_+,u_-)$ where $u_{\pm}$ are the functions introduced in (\ref{defusca}) for $\lambda=\lambda^0$. Set also 
\begin{eqnarray}
\label{defKappa}\kappa(H)&\hspace{-0.2cm}:=&\hspace{-0.2cm}\dsp\int_{I}H(s)(|\partial_su_{\tr}(0,s)|^2-\lambda^0\,|u_{\tr}(0,s)|^2)\,ds\in\R,\\[4pt]
\label{defAlpha}\alpha&\hspace{-0.2cm}:=&\hspace{-0.2cm}\dsp\int_{\Om}u_{\tr}(x,y)\overline{U(x,y)}\,dxdy\in\Cplx\times\Cplx,\\[4pt]
\label{defBeta}\beta(H)&\hspace{-0.2cm}:=&\hspace{-0.2cm}\dsp\int_{I}H(s)(\partial_su_{\tr}(0,s)\overline{\partial_sU(0,s)}-\lambda^0\,u_{\tr}(0,s)\overline{U(0,s)})\,ds\in\Cplx\times\Cplx.
\end{eqnarray}
\begin{theorem}\label{MainThmAsympto}
$\star$ Assume that $\lambda'\ne\kappa(H)$. Then we have
\[
\lim_{\eps\to0}\ \mathfrak{s}(\eps,\lambda^0+\eps\lambda')=\mathfrak{s}^0.
\]
$\star$ Assume that $H$ is such that $\kappa(H)\alpha\ne\beta(H)\in\Cplx\times\Cplx$. Then we have
\[
\lim_{\eps\to0}\ \mathfrak{s}(\eps,\lambda^0+\eps\kappa(H)+\eps^2\mu)=\mathfrak{s}^0+\cfrac{\tau^{\top}\tau}{i\tilde{\mu}-|\tau|^2/2},
\]
with $\tau:=(\kappa(H)\alpha-\beta(H))\,\mathfrak{s}$ and $\tilde{\mu}:=A\mu+B$ for some unimportant real constants $A$, $B$ with $A\ne0$. We emphasize that $A$, $B$ are independent of $\eps$, $\mu$.  
\end{theorem}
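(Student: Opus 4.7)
The plan is to reduce the problem to a $1$-dimensional Lyapunov--Schmidt equation for the amplitude of the almost-trapped mode, after a matched asymptotic expansion of $u^{\eps}_{\pm}$. The underlying mechanism is that the embedded eigenvalue $\lambda^0$ is destroyed by the perturbation and becomes a complex resonance at a point of the form $\lambda^0 + \eps\kappa(H) - i\gamma\eps^2 + O(\eps^3)$ with $\gamma>0$ proportional to $|\tau|^2$; the two items of the statement reflect whether the spectral parameter $\lambda^{\eps}$ is far from or close to this resonance.

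For the off-resonance statement, I would pull $\Om^{\eps}$ back to $\Om$ via a smooth diffeomorphism so that the perturbed Helmholtz operator with outgoing radiation conditions becomes an $O(\eps)$ perturbation of the unperturbed one. A Hadamard-type computation identifies $\kappa(H)$ from (\ref{defKappa}) as the first-order variation of $\lambda^0$ under the boundary move $n(s)=\eps H(s)$, so when $\lambda'\ne\kappa(H)$ the effective spectral parameter stays at distance of order $\eps$ from the perturbed eigenvalue. A standard resolvent estimate obtained by solving modulo $u_{\tr}$ in the unperturbed problem and then using the Fredholm alternative in the perturbed one then yields $\mathfrak{s}(\eps,\lambda^{\eps})=\mathfrak{s}^0+O(\eps)$.

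For the resonant regime $\lambda^{\eps}=\lambda^0+\eps\kappa(H)+\eps^2\mu$, I would use the ansatz
\[
u^{\eps}_{\pm} \;=\; u_{\pm} \;+\; c^{\eps}_{\pm}\,u_{\tr} \;+\; \eps\,\tilde u^{\eps}_{\pm},
\]
where $c^{\eps}_{\pm}\in\Cplx$ is the unknown amplitude and $\tilde u^{\eps}_{\pm}$, sought $\mL^2$-orthogonal to $u_{\tr}$, satisfies the outgoing radiation condition. Inserting the ansatz into the pulled-back equation and transporting the Neumann condition from $\partial\Om^{\eps}$ to $\partial\Om$ via $n(s)=\eps H(s)$ produces a right-hand side whose pairing with $u_{\tr}$ is, at leading order, exactly $\eps(\kappa(H)\alpha-\beta(H))$, by the definitions (\ref{defAlpha})--(\ref{defBeta}). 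Testing this residual equation against $u_{\tr}$ through Green's formula yields a scalar solvability equation for $c^{\eps}_{\pm}$ of the form $c^{\eps}_{\pm}(i\tilde\mu-|\tau|^2/2) = -\tau + O(\eps)$, and re-reading $c^{\eps}_{\pm}u_{\tr}$ on the propagating modes at $x\to\pm\infty$ delivers the announced rank-one correction $\tau^{\top}\tau/(i\tilde\mu-|\tau|^2/2)$ to $\mathfrak{s}^0$.

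The main technical obstacle is the $O(\eps^2)$ bookkeeping, specifically the identification of the non-self-adjoint part $|\tau|^2/2$ of the solvability denominator. This loss term arises from integrating the residual equation against $u_{\tr}$ on a large truncation $\{|x|<N\}$ and passing to the limit $N\to+\infty$: the cross boundary integrals between the slowly decaying tail $K\,e^{-x\sqrt{\pi^2-\lambda^0}}\cos(\pi y)$ of $u_{\tr}$ at $x\to+\infty$ and the outgoing oscillating tail of $\tilde u^{\eps}_{\pm}$ --- itself tied by matching to the scattering coefficients of the correction $c^{\eps}_{\pm}u_{\tr}$ --- survive as a quadratic form in $c^{\eps}_{\pm}$ whose value is exactly $|\tau|^2/2$. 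Controlling these cross terms rigorously requires sharp weighted norms for $\tilde u^{\eps}_{\pm}$ and uniform Fredholm bounds in the parameter $\mu$ on compact sets; once this is achieved, the symmetry and unitarity of $\mathfrak{s}(\eps,\lambda^{\eps})$ automatically enforce the symmetric rank-one form $\tau^{\top}\tau$ of the correction.
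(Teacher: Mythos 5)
First, a point of reference: the paper does not prove Theorem \ref{MainThmAsympto} at all --- it is imported verbatim as Theorem 5.1 of \cite{ChNa18}, where the proof goes through the \emph{augmented} scattering matrix of Nazarov--Plamenevskii, whose key virtue is that it is smooth in $(\eps,\lambda)$ at $(0,\lambda^0)$ even though the ordinary matrix $\mathfrak{s}(\cdot,\cdot)$ is not; the Fano formula is then read off from the algebraic relation between the two matrices. Your Lyapunov--Schmidt / matched-expansion route is therefore genuinely different, and it is a legitimate classical alternative (it is essentially the Shipman--Venakides-type derivation): it is more transparent physically, at the price of having to fight directly the blow-up of the solution operator near the resonance, which is precisely what the augmented-matrix device is designed to avoid.

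That said, two steps of your sketch would fail as written. First, you claim the rank-one correction is obtained by ``re-reading $c^{\eps}_{\pm}u_{\tr}$ on the propagating modes at $x\to\pm\infty$''. But $u_{\tr}$ is exponentially decaying (see (\ref{DefTrappedMode})); it carries no propagating component, so $c^{\eps}_{\pm}u_{\tr}$ contributes nothing to the far field. The $O(1)$ change in $\mathfrak{s}$ arises because near the resonance the amplitude is large, $c^{\eps}_{\pm}=O(\eps^{-1})$ (the $O(\eps)$ coupling $\eps(\kappa(H)\alpha-\beta(H))$ divided by the $O(\eps^2)$ denominator), and this large trapped-mode content re-radiates into the two channels through the $O(\eps)$ boundary perturbation, producing an $O(\eps)\cdot O(\eps^{-1})=O(1)$ outgoing field with amplitudes proportional to $\tau$; this bookkeeping of powers of $\eps$ is absent from your ansatz, which presents $c^{\eps}_{\pm}$ and $\eps\tilde u^{\eps}_{\pm}$ as if they were $O(1)$ and $O(\eps)$. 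Second, the mechanism you propose for the loss term $|\tau|^2/2$ does not operate: the cross boundary integrals at $x=\pm N$ between the tail $K e^{-x\sqrt{\pi^2-\lambda^0}}\cos(\pi y)$ of $u_{\tr}$ and the oscillating (piston, $y$-independent) tails of $\tilde u^{\eps}_{\pm}$ vanish in the limit $N\to+\infty$, both by transverse orthogonality of $\cos(\pi y)$ and $1$ on $(0;1)$ and by the exponential decay. The imaginary part of the solvability denominator is a Fermi-golden-rule contribution: it comes at second order, from pairing $u_{\tr}$ with the perturbation applied to the \emph{outgoing} first corrector (equivalently, from $\Im\langle R(\lambda^0+i0)f,f\rangle$ with $f$ the effective source), not from surviving cross terms at infinity. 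These are not cosmetic issues; they are exactly the places where the constant $|\tau|^2/2$ and the symmetric rank-one form $\tau^{\top}\tau$ are produced, so the proposal as it stands does not yet yield the stated formula.
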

\noindent Let us comment this result. The methodology to prove it is the following. First, we compute an asymptotic expansion of an auxiliary object called the augmented scattering matrix, which has been introduced in \cite{NaPl94bis,KaNa02} and \cite{Naza11,Naza12} as $\eps\to0$. The essential property is that this augmented scattering matrix considered as a function of $(\eps,\lambda)$ is smooth at $(0,\lambda^0)$. The procedure and the proof of error estimates are detailed in \cite{Naza11c,Naza12,Naza13}. Then using the relation existing between the usual scattering matrix and the augmented scattering matrix, we can get the statement of the theorem.\\
\newline
\begin{figure}[!ht]
\centering
\begin{tikzpicture}[scale=0.95]
\draw[->] (-2.9,0) -- (3.1,0) node[right] {$\eps$};
\draw[->] (0,-0.2) -- (0,3) node[right] {$\lambda$};
\node at (0,3.14/2-0.1) [left] {$\lambda^0$};
\begin{scope}
\clip(-2.5,-0.5) rectangle (2.5,3);
\draw[domain=-2.7:2.7,smooth,variable=\x,blue] plot ({\x},{3.14/2-\x*3.14/4+0.2*\x*\x});
\draw[domain=-2.7:2.7,smooth,variable=\x,blue] plot ({\x},{3.14/2-\x*3.14/4+0.3*\x*\x});
\draw[domain=-2.7:2.7,smooth,variable=\x,blue] plot ({\x},{3.14/2-\x*3.14/4+0.1*\x*\x});
\draw[domain=-2.7:2.7,smooth,variable=\x,blue] plot ({\x},{3.14/2-\x*3.14/4+0.05*\x*\x});
\draw[domain=-2.7:2.7,smooth,variable=\x,blue] plot ({\x},{3.14/2-\x*3.14/4-0.05*\x*\x});
\draw[domain=-2.7:2.7,smooth,variable=\x,blue] plot ({\x},{3.14/2-\x*3.14/4-0.2*\x*\x});
\draw[domain=-2.7:2.7,smooth,variable=\x,blue] plot ({\x},{3.14/2-\x*3.14/4-0.3*\x*\x});
\draw[domain=-2.7:2.7,smooth,variable=\x,blue] plot ({\x},{3.14/2-\x*3.14/4-0.1*\x*\x});
\end{scope}
\draw[red,dashed,very thick] (0.6,-0.2) -- (0.6,2.8);
\node at (0.6,-0.2) [below] {\textcolor{red}{$\eps_0$}};
\end{tikzpicture}\caption{The limit of $\mathfrak{s}(\eps,\lambda^{0}+\eps\kappa(H)+\eps^2\mu)$ as $\eps\to0$ depends on the parabolic curve chosen for the frequency. \label{FigFano1DParabola}}
\end{figure}
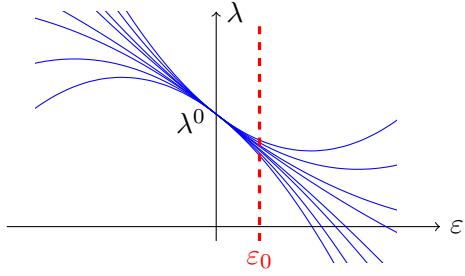

\noindent As explained in \cite{ChNa18}, Theorem \ref{MainThmAsympto} shows that the scattering matrix $\mathfrak{s}(\cdot,\cdot)$ is not continuous at the point $(0,\lambda^0)$ (setting where trapped modes exist). Indeed, the function $\mathfrak{s}(\cdot,\cdot)$ valued on different parabolic paths $\{(\eps,\lambda^{0}+\eps\kappa(H)+\eps^2\mu),\,\eps\in(0;\eps_0)\}$ (see Figure \ref{FigFano1DParabola}) have different limits when $\eps$ tends to zero. And for $\eps_0\ne0$ small fixed, the usual scattering matrix $\lambda\mapsto\mathfrak{s}(\eps_0,\lambda)$ exhibits a quick change in a neighbourhood of $\lambda^0+\eps_0\kappa(H)$. Indeed, the map $\mu\mapsto\mathfrak{s}(\eps_0,\lambda^0+\eps_0\kappa(H)+\eps_0^2\mu)$ has a large variation for $\mu\in[-C\eps_0^{-1};C\eps_0^{-1}]$ for some arbitrary $C>0$ (which is only a small change for $\lambda^{\eps_0}$). Said differently, a change of order $\eps$ of the frequency leads to a change of order one of the scattering matrix. This is nothing but the the Fano resonance phenomenon. For a given $C>0$, outside an interval of length $C\eps_0$ centred at $\lambda^0+\eps_0\kappa(H)$, $\mathfrak{s}(\eps_0,\cdot)$ is approximately equal to $\mathfrak{s}^0$.

\begin{remark}
When $H$ is such that $\kappa(H)\alpha=\beta(H)\in\Cplx\times\Cplx$, in general a fast Fano resonance phenomenon appears. More precisely, for a given $\eps_0\ne0$ small, the variation of $\mathfrak{s}(\eps_0,\cdot)$ of order one occurs on a range of frequencies of length $O(\eps_0^2)$ (instead of $O(\eps_0)$ when $\beta(H)\ne\kappa(H)t$). We write ``in general'' because we can also show that for well-chosen geometric perturbations, obtained solving a fixed-point problem, no Fano resonance phenomenon happens and the real eigenvalue embedded in the continuous spectrum keeps this property instead of becoming a complex resonance. In particular, this latter result allows one to construct non symmetric waveguides with eigenvalues embedded in the continuous spectrum (see \cite{Naza12,Naza13}).
\end{remark}
 
\noindent From now, we denote by $\tau_1$, $\tau_2\in\Cplx$ the two components of $\tau$, so that $\tau=(\tau_1,\tau_2)$, and we set
\[
\begin{array}{lcl}
\mathfrak{s}^{\eps}(\mu)&\hspace{-0.2cm}:=&\hspace{-0.2cm}\mathfrak{s}(\eps,\lambda^0+\eps\kappa(H)+\eps^2\mu)\\[2pt]
T^{\eps}(\mu)&\hspace{-0.2cm}:=&\hspace{-0.2cm}T(\eps,\lambda^0+\eps\kappa(H)+\eps^2\mu)\\[2pt]
R_+^{\eps}(\mu)&\hspace{-0.2cm}:=&\hspace{-0.2cm}R_+(\eps,\lambda^0+\eps\kappa(H)+\eps^2\mu)\\[2pt]
R_-^{\eps}(\mu)&\hspace{-0.2cm}:=&\hspace{-0.2cm}R_-(\eps,\lambda^0+\eps\kappa(H)+\eps^2\mu).
\end{array}
\] 
With this notation, the analysis developed in \cite{ChNa18} provides the estimate
\begin{equation}\label{MainAsymptos}
|\mathfrak{s}^{\eps}(\mu)-s^{\mrm{asy}}(\mu)|\le C \eps\qquad\mbox{with }\qquad s^{\mrm{asy}}(\mu)=\mathfrak{s}^0+\cfrac{\tau^{\top}\tau}{i\tilde{\mu}-|\tau|^2/2},
\end{equation}
where in (\ref{MainAsymptoT}), for any compact set $I\subset\R$, the constant $C>0$ can be chosen independent of $\mu\in I$. In particular, we have
\begin{equation}\label{MainAsymptoT}
|T^{\eps}(\mu)-T^{\mrm{asy}}(\mu)|\le C \eps\qquad\mbox{with }\qquad T^{\mrm{asy}}(\mu)=T^0+\cfrac{\tau_1\tau_2}{i\tilde{\mu}-(|\tau_1|^2+|\tau_2|^2)/2}.
\end{equation}
In order to prove that we have $T^{\eps}(\mu)=0$ for some $\mu\in\R$ for $\eps$ small enough, we first show that the map $\mu\mapsto T^{\mrm{asy}}(\mu)$ vanishes in $\R$. This is the object of the next section. 

\section{Asymptotic behaviour of the transmission coefficient}\label{sectionAsympto}

\begin{proposition}\label{PropositionAsympto}
Assume that $T^0\ne0$. Then we have 
\[
\{T^{\mrm{asy}}(\mu),\,\mu\in\R\}=\mathscr{C}^{\mrm{asy}}\setminus\{T^0\}
\]
where $\mathscr{C}^{\mrm{asy}}$ is a circle passing through $T^0$ and zero. 
\end{proposition}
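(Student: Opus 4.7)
The plan is to recognise $\mu \mapsto T^{\mrm{asy}}(\mu)$ as a M\"obius transformation composed with a rigid motion (giving immediately that its image is a circle minus one point), and then use the unitarity of the limiting matrix $s^{\mrm{asy}}(\mu)$ to force that circle to contain the origin.

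Set $r := (|\tau_1|^2+|\tau_2|^2)/2 > 0$. The affine bijection $\mu \mapsto i\tilde{\mu}-r$ sends $\R$ onto the vertical line $\ell = \{-r+it \,:\, t \in \R\}$, which avoids $0$; the M\"obius inversion $w \mapsto 1/w$ then sends $\ell$ bijectively onto the circle of radius $1/(2r)$ centred at $-1/(2r)$, punctured at $0$ (the only point not attained, approached as $|t|\to\infty$). Scaling by $\tau_1\tau_2$ and translating by $T^0$ gives
\[
\{T^{\mrm{asy}}(\mu) \,:\, \mu \in \R\} = \mathscr{C}^{\mrm{asy}}\setminus\{T^0\},
\]
where $\mathscr{C}^{\mrm{asy}}$ is the circle of centre $T^0 - \tau_1\tau_2/(2r)$ and radius $|\tau_1\tau_2|/(2r)$, and $T^0$ is the limit of $T^{\mrm{asy}}(\mu)$ as $|\mu|\to\infty$. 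For this to be a genuine circle rather than a single point, one needs $\tau_1\tau_2\ne 0$; this is verified below.

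The key input is that $s^{\mrm{asy}}(\mu)$ is itself unitary for every $\mu\in\R$: by (\ref{MainAsymptos}) and the unitarity of each $\mathfrak{s}^{\eps}(\mu)$, the closed unitarity condition passes to the limit $\eps\to 0$. Expanding $|T^{\mrm{asy}}(\mu)|^2+|R_+^{\mrm{asy}}(\mu)|^2=1$ via the explicit formulas and the unperturbed identity $|T^0|^2+|R_+^0|^2=1$, then clearing the denominator $\tilde{\mu}^2+r^2$, one obtains a polynomial of degree one in $\tilde{\mu}$ that must vanish identically; matching the coefficients of $\tilde{\mu}$ and of the constant term gives $R_+^0\bar{\tau}_1 + T^0\bar{\tau}_2 = \tau_1$, and the twin computation based on $|T^{\mrm{asy}}|^2+|R_-^{\mrm{asy}}|^2=1$ yields $T^0\bar{\tau}_1 + R_-^0\bar{\tau}_2 = \tau_2$. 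Jointly,
\begin{equation}\label{stau}
\mathfrak{s}^0\,\bar{\tau}^{\top} = \tau^{\top}.
\end{equation}
The same type of argument rules out $\tau_1=0$ or $\tau_2=0$: if, for instance, $\tau_1=0$, then $T^{\mrm{asy}}\equiv T^0$ and $R_+^{\mrm{asy}}\equiv R_+^0$ are constant whereas $R_-^{\mrm{asy}}$ is not, contradicting the constancy of $|R_-^{\mrm{asy}}(\mu)|^2=1-|T^0|^2$ forced by unitarity.

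To conclude, $0\in\mathscr{C}^{\mrm{asy}}$ is equivalent to $|2rT^0-\tau_1\tau_2|=|\tau_1\tau_2|$, which after squaring is the identity
\[
|T^0|^2\,(|\tau_1|^2+|\tau_2|^2) = 2\,\mrm{Re}\bigl(T^0\,\bar{\tau}_1\bar{\tau}_2\bigr).
\]
To derive this from (\ref{stau}), multiply its two scalar equations by $\tau_2$ and $\tau_1$ respectively and sum, yielding $2\tau_1\tau_2 = T^0(|\tau_1|^2+|\tau_2|^2) + X$ with $X:=R_+^0\bar{\tau}_1\tau_2 + R_-^0\tau_1\bar{\tau}_2$. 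Multiplying by $\bar{T}^0$, adding the complex conjugate, and using the off-diagonal unitarity relation $\bar{T}^0 R_+^0 = -T^0\bar{R}_-^0$ together with its conjugate to show that $\bar{T}^0 X + T^0\bar{X}=0$, the required identity drops out. The main obstacle is the derivation of (\ref{stau}) from unitarity --- which requires careful bookkeeping after clearing denominators --- together with this final algebraic manipulation, which hinges crucially on the off-diagonal relation of $\mathfrak{s}^0$; the rest is routine M\"obius geometry.
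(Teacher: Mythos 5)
Your proposal is correct, and it reaches the crucial intermediate identity by a genuinely different route than the paper. Both arguments reduce the claim to the same two facts: the M\"obius description of $\{T^{\mrm{asy}}(\mu),\,\mu\in\R\}$ as a circle punctured at $T^0$, and the vector identity $\overline{\tau}\,\mathfrak{s}^0=\tau$ (your relation $\mathfrak{s}^0\overline{\tau}^{\top}=\tau^{\top}$, equivalent by symmetry of $\mathfrak{s}^0$; it is (\ref{identityUtile}) in the paper, where the $a,b$ of (\ref{FormulaAsympto}) stand for $\tau_1,\tau_2$). The paper obtains this identity structurally: it first proves $\overline{U}\mathfrak{s}^0=U$ from the expansions (\ref{defusca}), unitarity and the orthogonality of $u_{\pm}$ to $u_{\tr}$, and then substitutes into the integral formulas (\ref{defKappa})--(\ref{defBeta}) defining $\tau$. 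You instead note that each $\mathfrak{s}^{\eps}(\mu)$ is unitary, so by (\ref{MainAsymptos}) the limit matrix $s^{\mrm{asy}}(\mu)$ is unitary for every $\mu\in\R$; clearing denominators and matching coefficients in $\tilde{\mu}$ then forces $\tau_1(\overline{R_+^0}\tau_1+\overline{T^0}\tau_2)=|\tau_1|^2$ and its twin, whence the identity once $\tau_1\tau_2\ne0$ is secured. This buys independence from the explicit formulas for $\alpha$, $\beta(H)$, $\kappa(H)$: only the rank-one structure $\tau^{\top}\tau$ of the correction and unitarity are used. The downstream algebra also differs: the paper eliminates the reflection coefficients via $R_-^0=-\overline{R_+^0}T^0/\overline{T^0}$ and a division by $\overline{\tau_1}$, while you form the combination $\tau_2\cdot(\mbox{first equation})+\tau_1\cdot(\mbox{second equation})$ and kill the cross term $X$ using the off-diagonal unitarity relation; both land on (\ref{MainIdentity}). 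One small gloss to repair: when ruling out $\tau_1=0$, the non-constancy of $R_-^{\mrm{asy}}$ does not by itself contradict the constancy of its modulus, since a M\"obius image of $\R$ may be a circle centred at the origin; you should add that in that degenerate case the image of $R_-^{\mrm{asy}}$ is a circle of radius $1$, which cannot be contained in $\{|w|=\sqrt{1-|T^0|^2}\}$ because $T^0\ne0$. With that one line the argument is complete.
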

\begin{proof}
Classical results concerning the M\"{o}bius transform guarantee that $\{T^{\mrm{asy}}(\mu),\,\mu\in\R\}$ coincides with $\mathscr{C}^{\mrm{asy}}\setminus\{T^0\}$ where $\mathscr{C}^{\mrm{asy}}$ is a circle passing through $T^0$. Let us show that $\mathscr{C}^{\mrm{asy}}$ also passes through zero. From (\ref{MainAsymptoT}), one finds that $T^{\mrm{asy}}(\mu)=0$ for some $\mu\in\R$ if and only if there holds
\begin{equation}\label{MainIdentity}
\cfrac{|\tau_1|^2+|\tau_2|^2}{2}=\Re e\left(\cfrac{\tau_1\tau_2}{T^0}\right).
\end{equation}
In order to establish (\ref{MainIdentity}), we need to derive some relations between $T^0$ and $\tau=(\tau_1,\tau_2)$. To proceed, first we notice that $U=(u_+,u_-)$ satisfies
\begin{equation}\label{jolieRelation}
\overline{U}\mathfrak{s}^0=U.
\end{equation}
Indeed, the first component of $\overline{U}\mathfrak{s}^0$ is equal to $R_+^0\,\overline{u_+}+T^0\,\overline{u_-}$ and using (\ref{defusca}), one finds that this function admits the expansion
\[
R_+^0\,\overline{u_+}+T^0\,\overline{u_-}=\begin{array}{|ll}
R_+^0\,(w_-+\overline{R_+^0}\,w_+)+T^0\,(\overline{T^0}\,w_+)+\dots,& \mbox{ for }x<-d\\
R_+^0\,(\overline{T^0}\,w_-)+T^0\,(w_++\overline{R_-^0}\,w_-)+\dots,& \mbox{ for }x>d.
\end{array}
\]
From the unitarity of $\mathfrak{s}^0$, we infer that $R_+^0\,\overline{u_+}+T^0\,\overline{u_-}$ has the same expansion as $u_{+}$ at infinity. Using that $u_{\pm}$ are orthogonal to $u_{\tr}$ in $\mL^2(\Om)$, we deduce that $R_+^0\,\overline{u_+}+T^0\,\overline{u_-}=u_+$. Similarly, we show that $T^0\,\overline{u_+}+R_-^0\,\overline{u_-}=u_-$, which allows us to conclude to (\ref{jolieRelation}). Now we exploit (\ref{jolieRelation}) to establish the identity
\begin{equation}\label{identityUtile}
\overline{\tau}\mathfrak{s}^0=\tau.
\end{equation}
From the expressions (\ref{defKappa})-(\ref{defBeta}) of $\kappa(H)$, $\alpha$, $\beta(H)$ and the properties of $\mathfrak{s}^0$, we obtain
\[
\overline{\tau}\mathfrak{s}^0=(\kappa(H)\overline{\alpha}-\overline{\beta(H)})\,\overline{\mathfrak{s}^0}\mathfrak{s}^0=\kappa(H)\overline{\alpha}-\overline{\beta(H)}.
\]
Then replacing $U$ by $\overline{U}\mathfrak{s}^0$ (identity (\ref{jolieRelation})) in $\kappa(H)\overline{\alpha}-\overline{\beta(H)}$, we get $\overline{\tau}\mathfrak{s}^0=(\kappa(H)\alpha-\beta(H))\mathfrak{s}^0=\tau$. This proves (\ref{identityUtile}) or equivalently
\begin{equation}\label{FormulaAsympto}
\begin{array}{|ll}
R_+^0\,\overline{\tau_1}+T^0\,\overline{\tau_2}=a\\[4pt]
T^0\,\overline{\tau_1}+R_-^0\,\overline{\tau_2}=b.
\end{array}
\end{equation}
Finally, we use (\ref{FormulaAsympto}) to establish (\ref{MainIdentity}). The unitarity of $\mathfrak{s}^0$ imposes $R_-^0=-\overline{R_+^0} T^0/\overline{T^0}$. Inserting this relation in the second line of  (\ref{FormulaAsympto}) gives 
\begin{equation}\label{Eqn1}
T^0\,\overline{\tau_1}-\cfrac{\overline{R_+^0} T^0}{\overline{T^0}}\,\overline{\tau_2}=\tau_2.
\end{equation}
The first line of (\ref{FormulaAsympto}) implies 
\begin{equation}\label{Eqn2}
R_+^0=\frac{\tau_1-T^0\overline{\tau_2}}{\overline{\tau_1}}.
\end{equation}
Inserting (\ref{Eqn2}) in (\ref{Eqn1}) and multiplying by $\tau_1$ leads to 
\[
T^0\,(|\tau_1|^2+|\tau_2|^2)-\cfrac{ T^0}{\overline{T^0}}\,\overline{\tau_1\tau_2}=\tau_1\tau_2\qquad\Leftrightarrow \qquad |\tau_1|^2+|\tau_2|^2 = 2 \,\Re e\left(\cfrac{\tau_1\tau_2}{T^0}\right).
\]
This is identity (\ref{MainIdentity}).
\end{proof}
\begin{remark}
The reason why $\mathscr{C}^{\mrm{asy}}$ passes through zero is quite mysterious. When $\Om$, $\Om^{\eps}$ are symmetric with respect to the $(Oy)$ axis, this can be shown quite simply working with half-waveguides problems (see e.g. \cite{ChNa18}). But without assumption of symmetry, we cannot provide a physical interpretation of this fact.
\end{remark}
\noindent Denote $\mu_{\star}$ the value of $\mu$ such that $T^{\mrm{asy}}(\mu_{\star})=0$ and for $\eps>0$, define the interval $I^{\eps}=(\mu_{\star}-\sqrt{\eps};\mu_{\star}+\sqrt{\eps})$.
From (\ref{MainAsymptoT}), for $\eps>0$ small, we know that the curve
\[
C^{\eps}=\{T^{\eps}(\mu),\,\mu\in I^{\eps}\}
\]
passes close to zero. It remains to show that $C^{\eps}$ passes exactly through zero for $\eps$ small enough.

\section{Exact zero transmission}\label{SectionTNull}

Now, we state and prove the main result of the article. Its proof relies on Proposition \ref{PropositionAsympto} and an argument presented in \cite{ChPa19} (see also \cite{Lee99}).

\begin{theorem}\label{MainThmPart1}
Assume that $T^0\ne0$. Then there is $\eps_0>0$ such that for all $\eps\in(0;\eps_0]$, there exists $\mu\in\R$ such that $T^\eps(\mu)=0$.
\end{theorem}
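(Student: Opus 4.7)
The plan is to use the unitary and symmetric structure of $\mathfrak{s}^\eps$ on the real axis to convert the complex-valued coefficient $T^\eps$ into a real-valued function with the same zero set, and then to invoke the intermediate value theorem on the interval $I^\eps=(\mu_\star-\sqrt{\eps};\mu_\star+\sqrt{\eps})$, exploiting the non-degenerate zero of $T^{\mrm{asy}}$ at $\mu_\star$ that Proposition~\ref{PropositionAsympto} supplies.

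To set this up, I would first record the pointwise identity
\[
T^\eps(\mu) = -\det(\mathfrak{s}^\eps(\mu))\,\overline{T^\eps(\mu)},\qquad \mu\in\R,
\]
obtained by matching the cofactor inverse of the symmetric unitary matrix $\mathfrak{s}^\eps(\mu)$ against $(\mathfrak{s}^\eps(\mu))^{-1}=\overline{\mathfrak{s}^\eps(\mu)}$. Choosing a continuous argument $\alpha^\eps(\mu)$ of the unimodular quantity $\det\mathfrak{s}^\eps(\mu)$ in a neighbourhood of $\mu_\star$, the function
\[
h^\eps(\mu):=-i\,e^{-i\alpha^\eps(\mu)/2}\,T^\eps(\mu)
\]
is real-valued (immediate from the identity above) and vanishes precisely where $T^\eps$ does. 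The same construction applied to $\mathfrak{s}^{\mrm{asy}}$, which is itself symmetric and unitary on $\R$ by a short calculation using $\overline{\tau}\mathfrak{s}^0=\tau$ and the form (\ref{MainAsymptos}), produces the analogous real function $h^{\mrm{asy}}$.

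The key step is to show that $h^{\mrm{asy}}$ crosses zero with nonzero slope at $\mu_\star$. The M\"obius expression (\ref{MainAsymptoT}) is non-degenerate because $\mathscr{C}^{\mrm{asy}}$ is a genuine circle (which forces $\tau_1\tau_2\ne 0$), so $(T^{\mrm{asy}})'(\mu_\star)\ne 0$. Differentiating $T^{\mrm{asy}}=-\det(\mathfrak{s}^{\mrm{asy}})\overline{T^{\mrm{asy}}}$ at $\mu_\star$ then pins down $\arg(T^{\mrm{asy}})'(\mu_\star)=\alpha^{\mrm{asy}}(\mu_\star)/2+\pi/2$ modulo $\pi$, whence $(h^{\mrm{asy}})'(\mu_\star)=\pm|(T^{\mrm{asy}})'(\mu_\star)|\ne 0$ and $h^{\mrm{asy}}(\mu_\star\pm\sqrt{\eps})=\pm c\sqrt{\eps}+O(\eps)$ for some $c\ne 0$. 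From (\ref{MainAsymptos})--(\ref{MainAsymptoT}) and a coherent choice of branches of $\alpha^\eps/2$ and $\alpha^{\mrm{asy}}/2$ at $\mu_\star$ one obtains $|h^\eps-h^{\mrm{asy}}|\le C\eps$ on $I^\eps$. Since $\sqrt{\eps}\gg\eps$ for small $\eps$, the sign change of $h^{\mrm{asy}}$ transfers to $h^\eps$, and the IVT delivers the required real $\mu\in I^\eps$ with $T^\eps(\mu)=0$.

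The only genuinely delicate point I foresee is the branch bookkeeping: the square root $e^{-i\alpha^\eps/2}$ entering $h^\eps$ is defined only up to a global sign, and the lifts of $\alpha^\eps/2$ and $\alpha^{\mrm{asy}}/2$ must be synchronised so that $|h^\eps-h^{\mrm{asy}}|$ is really $O(\eps)$ on $I^\eps$ rather than $O(1)$. This is handled by anchoring both branches at the reference point $\mu_\star$ (where $\det\mathfrak{s}^\eps(\mu_\star)-\det\mathfrak{s}^{\mrm{asy}}(\mu_\star)=O(\eps)$) and extending by continuity in $\mu$. Beyond this, the argument needs no further input about $\Om^\eps$ or about the perturbation: the unitary-symmetric identity for $\mathfrak{s}^\eps$ combined with the asymptotic circle provided by Proposition~\ref{PropositionAsympto} is already enough.
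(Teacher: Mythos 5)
Your argument is correct, and it is genuinely different from the one in the paper. The paper argues by contradiction: it uses the off-diagonal unitarity relation $R_+^{\eps}\overline{T^{\eps}}+T^{\eps}\overline{R_-^{\eps}}=0$ to note that if $T^{\eps}$ avoided zero on $I^{\eps}$, the phase $T^{\eps}/\overline{T^{\eps}}$ would have to sweep across two quadrants (because the limit circle $\mathscr{C}^{\mrm{asy}}$ passes through the origin), while $R_+^{\eps}/\overline{R_-^{\eps}}$ stays within $O(\eps^{1/4})$ of a fixed unimodular constant near $\mu_{\star}$ --- a contradiction. You instead use the equivalent consequence $T^{\eps}=-\det(\mathfrak{s}^{\eps})\,\overline{T^{\eps}}$ of unitarity and symmetry to manufacture a real-valued function $h^{\eps}=-ie^{-i\alpha^{\eps}/2}T^{\eps}$ with the same zero set, and then run the intermediate value theorem against the sign change of $h^{\mrm{asy}}$ at $\mu_{\star}$. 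Your supporting claims all check out: $\tau\neq0$ together with identity (\ref{MainIdentity}) forces $\tau_1\tau_2\neq0$, so $(T^{\mrm{asy}})'(\mu_{\star})\neq0$; the matrix $s^{\mrm{asy}}$ is indeed unitary and symmetric on $\R$ (a direct computation using $\overline{\tau}\mathfrak{s}^0=\tau$ shows the cross terms cancel), so $h^{\mrm{asy}}$ is real; and the branch synchronisation is unproblematic since $\det\mathfrak{s}^{\eps}-\det s^{\mrm{asy}}=O(\eps)$ uniformly on $I^{\eps}$ and both lifts can be anchored at $\mu_{\star}$ and continued. Like the paper, you implicitly use continuity of $\mu\mapsto\mathfrak{s}^{\eps}(\mu)$, which is fine. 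What your route buys is a direct (non-contradiction) argument that in addition localises the zero within $O(\eps)$ of $\mu_{\star}$ (since $h^{\mrm{asy}}$ has nonzero slope there and $|h^{\eps}-h^{\mrm{asy}}|\le C\eps$), which is sharper than the $O(\sqrt{\eps})$ window the paper works in; the paper's argument is slightly lighter on bookkeeping because it never needs a square root of the determinant, only the ratio of reflection coefficients, and its quadrant picture makes transparent why the same mechanism fails for $R_+^{\eps}$.
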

\begin{proof}
Let us first give the general idea of the proof. Assume by contradiction that for all $\eps>0$, $\mu\mapsto T^{\eps}(\mu)$ does not pass through zero in $I^\eps$. Since $\mathfrak{s}^{\eps}(\mu)$ is unitary, there holds $R_+^{\eps}(\mu)\,\overline{T^{\eps}(\mu)}+T^{\eps}(\mu)\,\overline{R_-^{\eps}(\mu)}=0$ and so
\begin{equation}\label{Unitarity}
-R_+^{\eps}(\mu)/\overline{R_-^{\eps}(\mu)}=T^{\eps}(\mu)/\overline{T^{\eps}(\mu)}\qquad\forall\mu\in I^{\eps}.
\end{equation}
But if $\mu\mapsto T^{\eps}(\mu)$ does not pass through zero on $I^{\eps}$, using Proposition \ref{PropositionAsympto} one can verify that the point $T^{\eps}(\mu)/\overline{T^{\eps}(\mu)}=e^{2i\mrm{arg}(T^{\eps}(\mu))}$ must run rapidly on the unit circle for $\mu\in I^\eps$ as $\eps\to0$. On the other hand, $R_+^{\eps}(\mu)/\overline{R_-^{\eps}(\mu)}$ tends to a constant in $I^\eps$ as $\eps\to0$. This way we obtain a contradiction. We emphasize that the unitary structure of $\mathfrak{s}^{\eps}(\mu)$ is the key ingredient of this step of the proof. Now we make this discussion more rigorous.\\

\begin{figure}[!ht]
\centering
\begin{tikzpicture}[scale=3]
\draw[draw=none,fill=red!30] (0,0)--(8.13:1)--(8.13:1)arc(8.13:98.13:1)--cycle;
\draw[draw=none,fill=red!30] (0,0)--(188.13:1)--(188.13:1)arc(188.13:278.13:1)--cycle;
\draw[blue,line width=0.3mm] (4/5,-3/5) circle (5/5);
\draw[green!60!black,line width=0.3mm] (4/5,-3/5+0.1) ellipse (1.1 and 0.9);
\draw[->] (-1.2,0) -- (1.2,0);
\draw[->] (0,-1.2) -- (0,1.2);
\draw[red,line width=0.3mm] (0,0)--(8.13:1);
\draw[red,line width=0.3mm] (0,0)--(98.13:1);
\draw[red,line width=0.3mm] (0,0)--(188.13:1);
\draw[red,line width=0.3mm] (0,0)--(278.13:1);
\draw[domain=-0.8:0.8,smooth,variable=\x] plot ({\x},{4*\x/3.});
\node[blue] at (1.4,-1.55){$\mathscr{C}^{\mrm{asy}}$};
\node[magenta] at (-0.26,0.17){$T^{\eps}(a_{\eps})$};
\node[magenta] at (-0.3,-0.15){$T^{\eps}(b_{\eps})$};
\node[green!60!black] at (2.2,-0.2){$\mu\mapsto T^{\eps}(\mu)$};
\node at (-0,0.8){$Q_{1}$};
\node at (0,-0.8){$Q_{2}$};
\fill[magenta] (-0.015,0.11) circle (0.02);
\fill[magenta] (-0.13,-0.02) circle (0.02);
\node at (1.35,1.1){$\{\rho\,e^{i\eta}\in\Cplx,\ \rho\in\R\}$};
\end{tikzpicture}
\caption{Notation used in the proof of Theorem \ref{MainThmPart1}.\label{PictureProof}}
\end{figure}
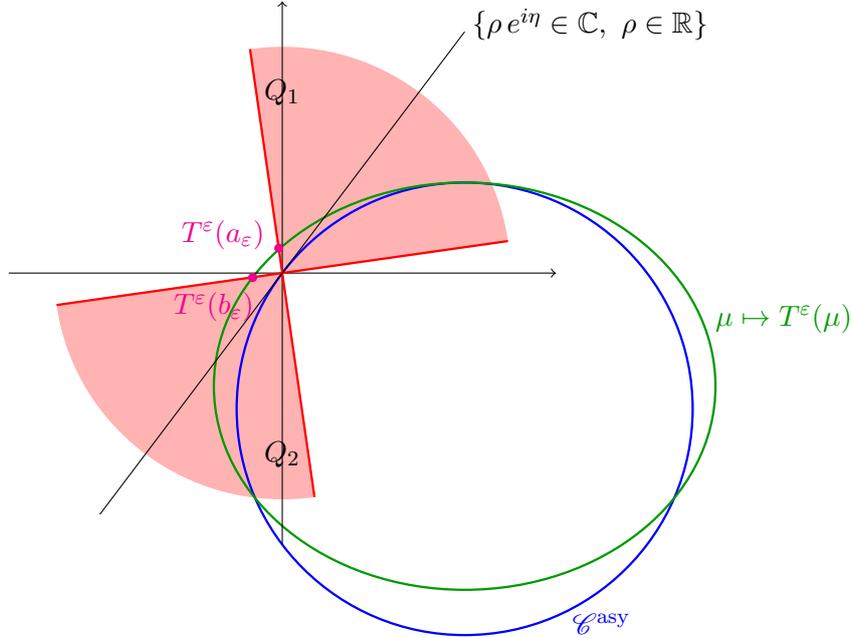

\noindent Since the circle $\mathscr{C}^{\mrm{asy}}$ passes through zero, there is $\eta\in(-\pi/2;\pi/2]$ such that $\mathscr{C}^{\mrm{asy}}$ is tangent to the line $\{\rho\,e^{i\eta}\in\Cplx,\ \rho\in\R\}$. Define the quadrants 
\[
\begin{array}{ll}
Q_1:=\{\rho\,e^{i\theta}\in\Cplx\,|\,\rho>0,\ \eta-\pi/4<\theta<\eta+\pi/4\}\\[2pt]
Q_2:=\{\rho\,e^{i\theta}\in\Cplx\,|\,\rho<0,\ \eta-\pi/4<\theta<\eta+\pi/4\},
\end{array}
\]
see Figure \ref{PictureProof}. The graph of the map $\mu\mapsto T^{\mrm{asy}}(\mu)$ crosses both quadrants $Q_1$ and $Q_2$ in $I_{\eps}$. On the other hand, we have $|T^{\eps}(\mu)-T^\mrm{asy}(\mu)|\le C\eps$ where $C>0$ is independent of $\mu\in I_{\eps}$ for all $\eps\in(0;\eps_0]$. As a consequence, there is $\eps_0$ such that for all $\eps\in(0;\eps_0]$, the graph of the map $\mu\mapsto T^{\eps}(\mu)$ intersects both $Q_1$ and $Q_2$ on $I_{\eps}$.\\
\newline
If  $\mu\mapsto T^{\eps}(\mu)$ does not vanish in $I^\eps$, since $\mu\mapsto T^{\eps}(\mu)$ is continuous, we deduce that for all $\eps\in(0;\eps_0]$, there are $a_{\eps},\,b_{\eps}\in I^\eps$ such that $T^{\eps}(a_{\eps})=t_{\eps}\,e^{i(\eta-\pi/4)}$ and $T^{\eps}(b_{\eps})=\tilde{t}_{\eps}\,e^{i(\eta+\pi/4)}$, with $t_{\eps}$, $\tilde{t}_{\eps}\in\R\setminus\{0\}$. Taking successively $\mu=a_{\eps}$, $\mu=b_{\eps}$ in the relation preceding (\ref{Unitarity}), we obtain 
\begin{equation}\label{RelationContradicton}
R_+^{\eps}(a_{\eps})=-ie^{2i\eta}\overline{R_-^{\eps}(a_{\eps})}\qquad\mbox{ and }\qquad R_+^{\eps}(b_{\eps})=ie^{2i\eta}\overline{R_-^{\eps}(b_{\eps})}. 
\end{equation}
Introduce the functions $R_{\pm}^{\mrm{asy}}$ such that
\[
R_+^{\mrm{asy}}(\mu)=R_+^0+\cfrac{a^2}{i\tilde{\mu}-(|a|^2+|b|^2)/2}\qquad\mbox{ and }\qquad R_-^{\mrm{asy}}(\mu)=R_-^0+\cfrac{b^2}{i\tilde{\mu}-(|a|^2+|b|^2)/2}.
\]
From (\ref{MainAsymptos}), we know that there is $\eps_0>0$ such that, for all $\eps\in(0;\eps_0]$, we have 
\[
R_+^{\eps}(a_{\eps}),\,R_+^{\eps}(b_{\eps})\in B(R_+^{\mrm{asy}}(\mu_{\star}),\eps^{1/4})\qquad\mbox{ and }\qquad R_-^{\eps}(a_{\eps}),\,R_-^{\eps}(b_{\eps})\in B(R_-^{\mrm{asy}}(\mu_{\star}),\eps^{1/4}),
\]
where for $z_0\in\Cplx$, $B(z_0,r)$ denotes the open disk of $\Cplx$ of radius $r>0$ centred at $z_0$. From (\ref{RelationContradicton}), we deduce that we must have both
\[
B(R_+^{\mrm{asy}}(\mu_{\star}),\eps^{1/4})\cap B(ie^{2i\eta}\overline{R_-^{\mrm{asy}}(\mu_{\star})},\eps^{1/4})\neq\emptyset\ \mbox{ and }\  B(R_+^{\mrm{asy}}(\mu_{\star}),\eps^{1/4})\cap B(-ie^{2i\eta}\overline{R_-^{\mrm{asy}}(\mu_{\star})},\eps^{1/4})\neq\emptyset.
\]
This is impossible for $\eps$ small enough because $|R_-^{\mrm{asy}}(\mu_{\star})|=1$ (remember that $T^{\mrm{asy}}(\mu_{\star})=0$). Thus, we deduce that for all $\eps\in(0;\eps_0]$, $\mu\mapsto T^{\eps}(\mu)$ cancels in $I^\eps$. 
\end{proof}

\noindent Concerning the zeros of $\mu\mapsto R_+^{\eps}(\mu)$, we can make the following comments. When $\eps$ tends to zero, from (\ref{MainAsymptos}), we know that the curve $\{R_+^{\eps}(\mu),\,\mu\in\R\}$ gets closer and closer to $\{R_+^{\mrm{asy}}(\mu),\,\mu\in\R\}$. The set $\{R_+^{\mrm{asy}}(\mu),\,\mu\in\R\}$ is a circle. It passes through zero if and only if we have
\begin{equation}\label{MainIdentity}
\cfrac{|a|^2+|b|^2}{2}=\Re e\left(\cfrac{a^2}{R_+^0}\right).
\end{equation}
Dividing the first line of (\ref{FormulaAsympto}) by $R_+^0$ and computing the square of the modulus, we obtain the identity
\[
|a|^2\left(1+\cfrac{1}{|R_+^0|^2}\right)-2\,\Re e\,\left(\cfrac{a^2}{R_+^0}\right)=\cfrac{|T^0|^2}{|R_+^0|^2}\,|b|^2.
\]
Using the above equality, we obtain that (\ref{MainIdentity}) is satisfied if and only if there holds 
\begin{equation}\label{Criterion}
|a|=|b|.
\end{equation}
As a consequence, if $|a|\ne|b|$, for $\eps$ small enough, $\mu\mapsto R_+^{\eps}(\mu)$ does not pass through zero. Using the definition of $\tau$ in Theorem \ref{MainThmAsympto}, we observe that we have $|a|=|b|$ if $\Om$ and $H$ are symmetric with respect to the $(Oy)$ axis. However, surely it is not necessary to consider symmetric geometries to have (\ref{Criterion}). But we emphasize that if (\ref{Criterion}) holds in a non symmetric setting, then we cannot work as in the proof of Theorem \ref{MainThmPart1} to get exactly $R_+^{\eps}(\mu)=0$ for some $\mu\in\R$. Everything lies in the fact that the identity (\ref{Unitarity}) cannot be exploited similarly for the reflection and the transmission coefficients. Therefore, a priori nothing guarantees that exact zero reflection occurs during the Fano resonance phenomenon in a non symmetric waveguide, even when (\ref{Criterion}) is satisfied.

\section{Numerical results}\label{SectionNumerics}

In this section, we illustrate the results obtained above. In the first series of experiments, we work in the geometry 
\[
\Om^{\eps}:=\R\times(0;1)\setminus \left([-0.5;0.5]\times [0.35+\eps;0.65+\eps]\ \cup\  [0;0.5]\times [0.15+\eps;0.85+\eps]\right)
\]
pictured in Figure \ref{GeomTrapped} left. In $\Om:=\Om^0$ the obstacle is symmetric with respect to the line $\R\times\{1/2\}$. According to the results of the literature (see e.g. \cite{EvLV94}), we know that there are trapped modes for certain real frequencies in this geometry. Using Perfectly Matched Layers \cite{Bera94,BeBL04,BoCP18}, we find that they exist for $\sqrt{\lambda^0}\approx1.9939$. Figure \ref{GeomTrapped} right represents such a trapped mode in $\Om$.\\

\begin{figure}[!ht]
\centering
\begin{tikzpicture}[scale=1.4]
\begin{scope}[xshift=-1.4cm].
\draw[fill=gray!30,draw=none](-1.5,0) rectangle (1.5,1);
\draw[fill=white,draw=none](-0.5,0.35) rectangle (0.5,0.7);
\draw[fill=white,draw=none](0,0.15) rectangle (0.5,0.85);
\draw (-1.5,0)--(1.5,0); 
\draw (-1.5,1)--(1.5,1); 
\draw (-0.5,0.35)--(0,0.35)--(0,0.15)--(0.5,0.15)--(0.5,0.85)--(0,0.85)--(0,0.7)--(-0.5,0.7)--(-0.5,0.35); 
\draw[dashed] (1.8,1)--(1.5,1); 
\draw[dashed] (-1.8,1)--(-1.5,1); 
\draw[dashed] (1.8,0)--(1.5,0); 
\draw[dashed] (-1.8,0)--(-1.5,0); 
\draw[dotted,>-<] (0.2,-0.05)--(0.2,0.6);
\node at (0.5,0.3){\footnotesize  $0.5+\eps$};
\end{scope}
\end{tikzpicture}\qquad\quad\includegraphics[width=0.6\textwidth]{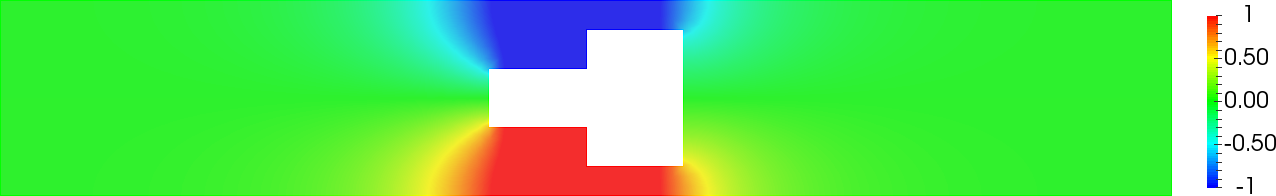}
\caption{Left: geometry of $\Om^{\eps}$. Right: real part of a trapped mode for $\eps=0$ and $\sqrt{\lambda^0}\approx1.9939 $. 
\label{GeomTrapped}}
\end{figure}

\noindent The domain $\Om^{\eps}$ is obtained from $\Om$ by shifting by $\eps$ the obstacle along the $(Oy)$ axis. Admittedly, this kind of perturbation is not exactly the one considered in (\ref{DefGeom}). However, since there exists an almost identical mapping from $\Om$ to $\Om^{\eps}$, results are similar. We emphasize that for $\eps>0$, $\Om^{\eps}$ has no symmetry property. In Figure \ref{MatriceScattering}, we display the values of the complex scattering coefficients $R_+(\eps,\lambda)$, $T(\eps,\lambda)$ appearing in the decomposition (\ref{defusca}) of $u_{+}$ for $\eps=0.05$ and for $\sqrt{\lambda}\in(1.97;2.03)$ (note that this interval contains the value $\sqrt{\lambda^0}$). To proceed, we use a $\mrm{P2}$ finite element method in a truncated geometry. On the artificial boundary created by the truncation, a Dirichlet-to-Neumann operator with $20$ terms serves as a transparent condition. As expected, we observe that $\lambda\mapsto T(\eps,\lambda)$ passes through zero.\\

\begin{figure}[!ht]
\centering
\includegraphics[scale=0.7]{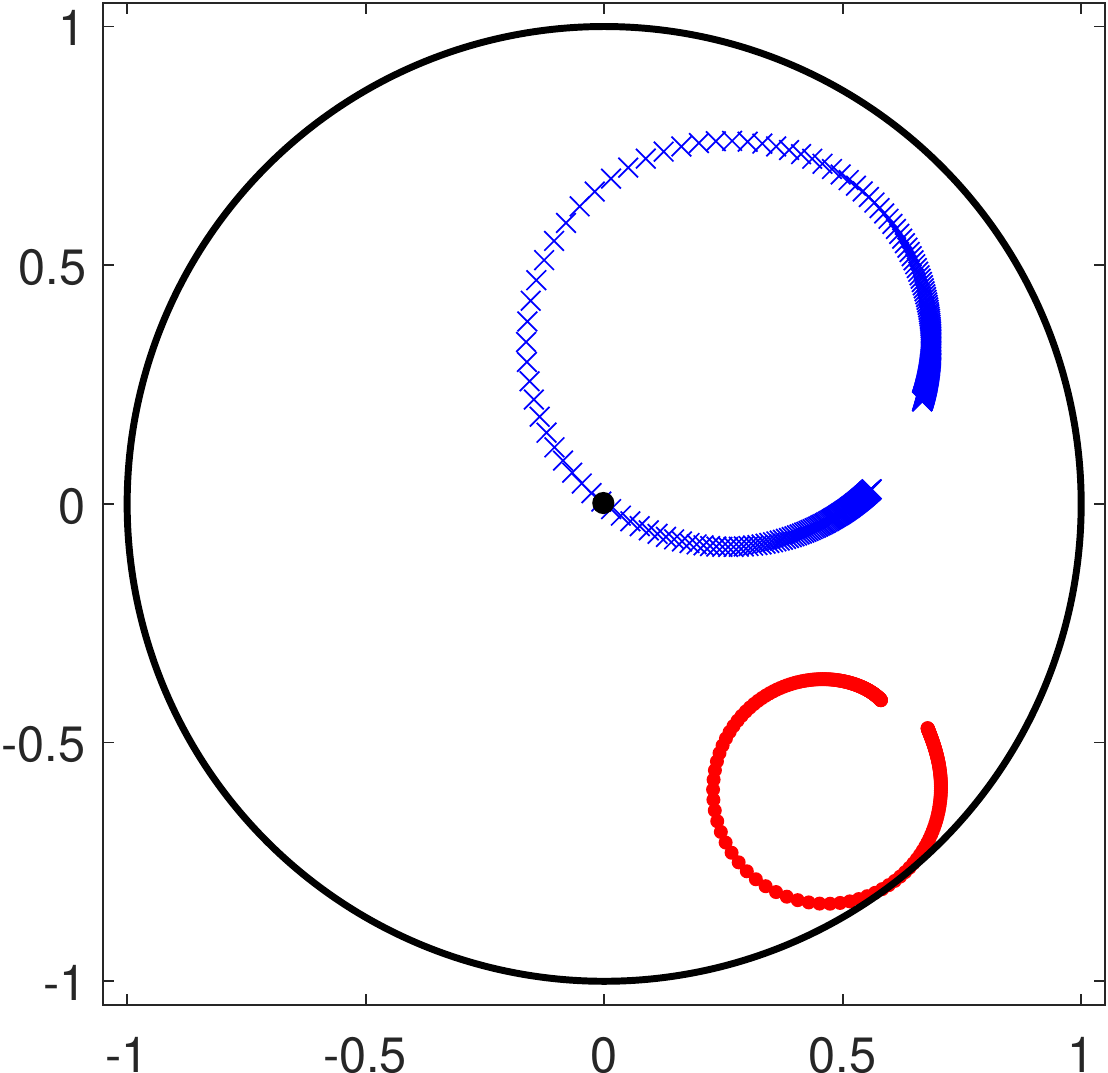}
\caption{Scattering coefficients $T(\eps,\lambda)$ (\textcolor{blue}{$\times$}) and $R_+(\eps,\lambda)$ (\raisebox{0.5mm}{\protect\tikz \fill[red] (0,3) circle (0.6mm);}) for $\eps=0.05$ and $\sqrt{\lambda}\in(1.97;2.03)$. As predicted, $\lambda\mapsto T(\eps,\lambda)$  passes through zero around $\lambda^0$. According to the conservation of energy, we have $|R_+(\cdot,\cdot)|^2+|T(\cdot,\cdot)|^2=1$ and so the scattering coefficients are located inside the unit disk delimited by the black bold line. 
\label{MatriceScattering}}
\end{figure}

\noindent In Figure \ref{FigMultiEps}, we display the curves $\lambda\mapsto |T(\eps,\lambda)|$ for several $\eps$ and a range of values of $\lambda$. The right picture is a zoom of the left picture around $\lambda^0$. As expected we observe that for the different $\eps$, we have $T(\eps,\lambda)=0$ for one $\lambda$ close to $\lambda^0$. We also note that the smaller $\eps>0$ is, the faster the Fano resonance phenomenon occurs. This is also expected. Finally, in Figure \ref{FigTNull}, we display the real part of $u_{+}$ (see (\ref{defusca})) in $\Om^{\eps}$ for $\eps=0.05$ and $\sqrt{\lambda}=2.0072$. In this setting, there holds $T(\eps,\lambda)\approx0$. And indeed we observe that the incident rightgoing wave $w_+$ is completely backscattered, this is the mirror effect.

\begin{figure}[!ht]
\centering
\includegraphics[width=0.45\textwidth]{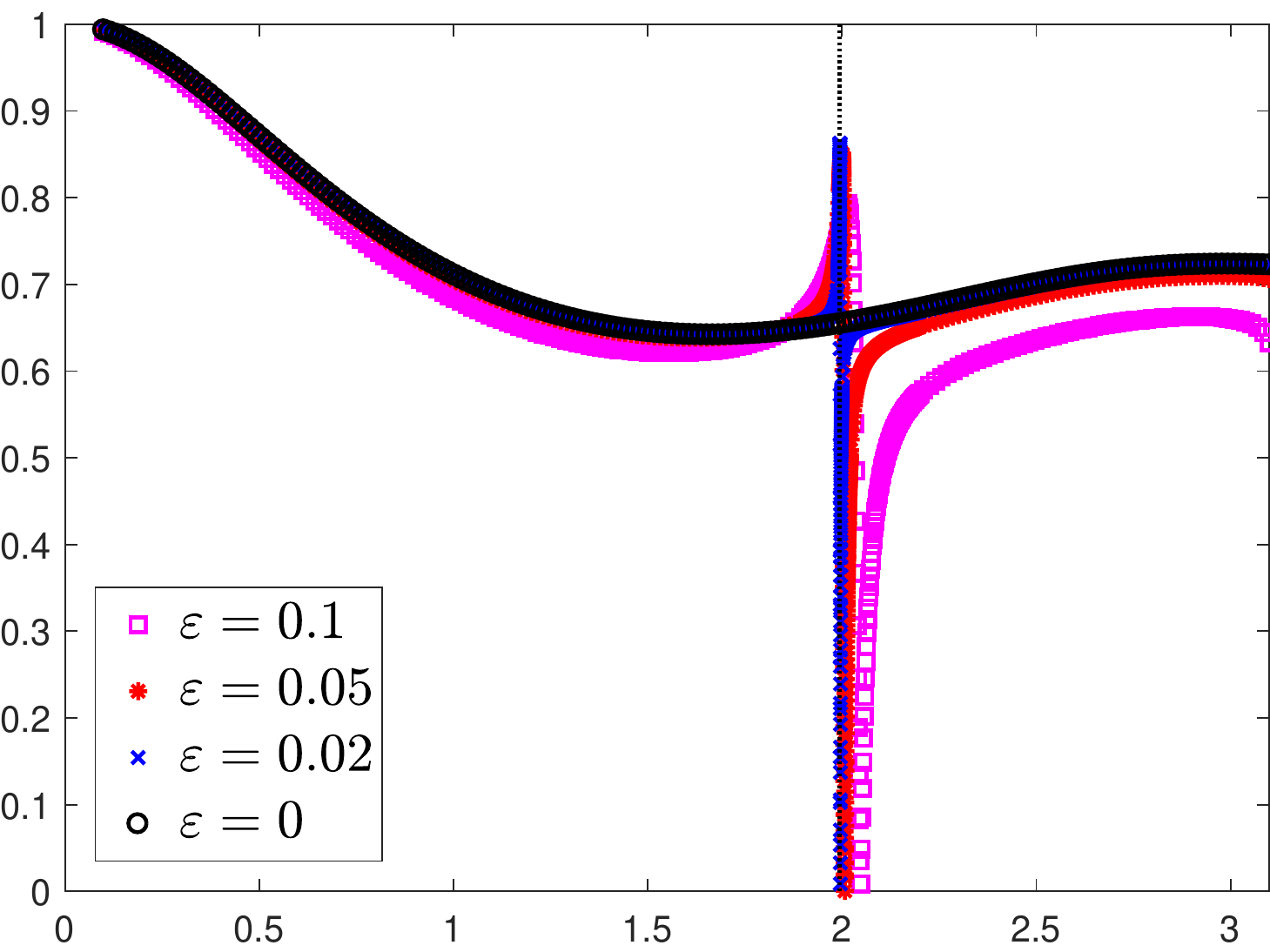}\qquad\includegraphics[width=0.45\textwidth]{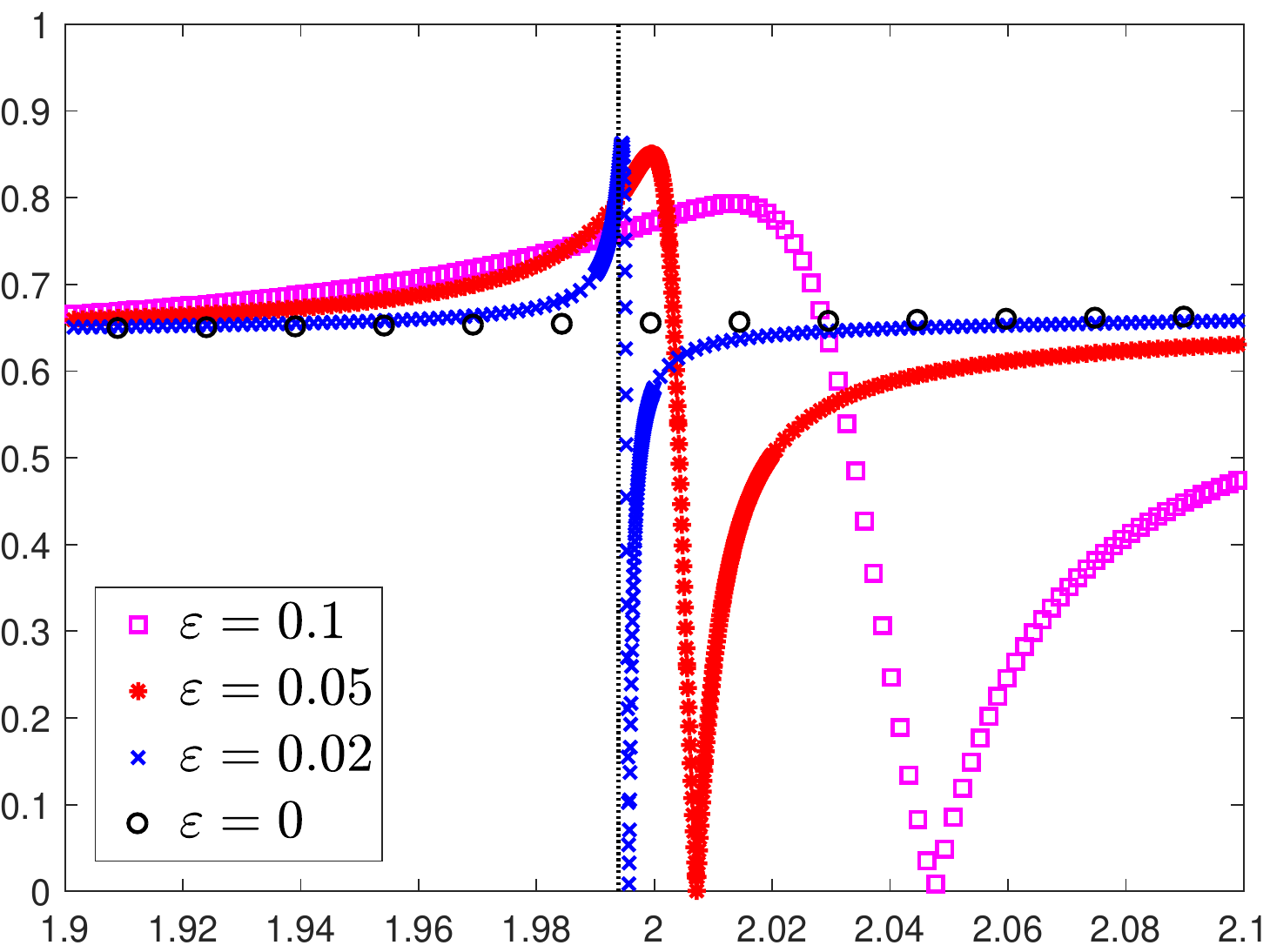}
\caption{Curves $\lambda\mapsto |T(\eps,\lambda)|$ for several $\eps$ and $\sqrt{\lambda}\in(0;\pi)$ (left), $\sqrt{\lambda}\in(1.9;2.1)$ (right). The vertical dotted line represents the value of $\sqrt{\lambda}^0$. \label{FigMultiEps}}
\end{figure}

\begin{figure}[!ht]
\centering
\includegraphics[width=0.9\textwidth]{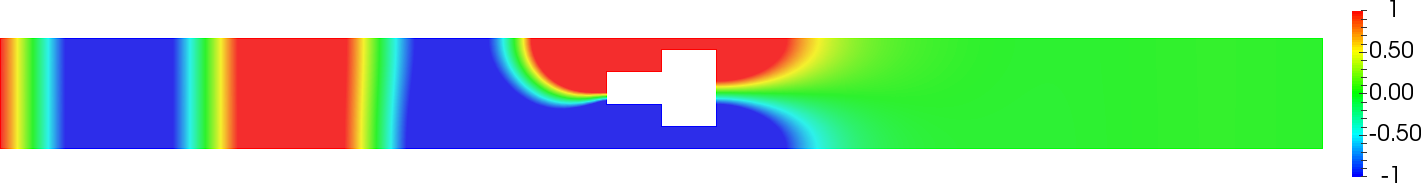}
\caption{Real part of $u_+$ in $\Om^{\eps}$ for $\eps=0.05$ and $\sqrt{\lambda}=2.0072$. In this setting, we have $T(\eps,\lambda)\approx0$. \label{FigTNull}}
\end{figure}

\newpage

\noindent In the second series of experiments, we work in the geometry of Figure \ref{FigTNullBis}. Using Perfectly Matched Layers, we find a complex resonance $\lambda_{c}$ such that $\sqrt{\lambda_{c}}\approx 2.49-0.15i$. In Figure \ref{MatriceScatteringBis}, we display the values of the complex scattering coefficients $R_+(\lambda)$, $T(\lambda)$ appearing in the decomposition (\ref{defusca}) of $u_{+}$ for $\sqrt{\lambda}\in(2.1;2.8)$ (note that this interval contains the value $\Re e\,\sqrt{\lambda_{c}}$). Though this experiment does not strictly enter the framework presented in this note (we do not start from a situation where trapped modes exist), we observe that the curve $\lambda\mapsto T(\lambda)$ passes through zero for $\lambda$ in a neighbourhood of 
$\Re e\,\lambda_{c}$. In Figure \ref{FigTNullBis}, we display the real part of $u_{+}$ for $\sqrt{\lambda}=2.4016$. In this setting, we have $T(\lambda)\approx0$.

\begin{figure}[!ht]
\centering
\includegraphics[scale=0.7]{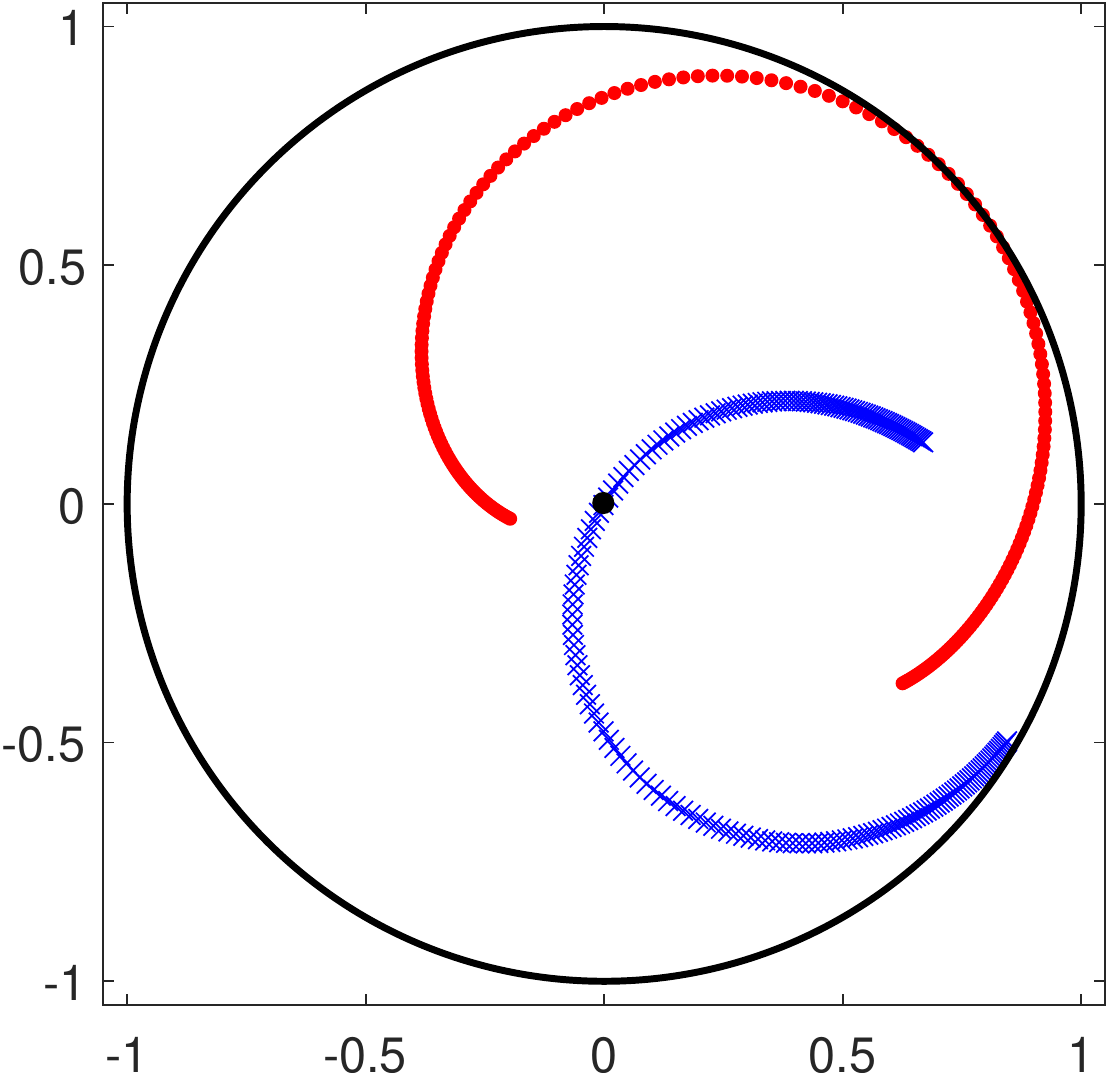}
\caption{Scattering coefficients $T(\lambda)$ (\textcolor{blue}{$\times$}) and $R_+(\lambda)$ (\raisebox{0.5mm}{\protect\tikz \fill[red] (0,3) circle (0.6mm);}) for $\sqrt{\lambda}\in(2.1;2.8)$ in the geometry of Figure \ref{FigTNullBis}. \label{MatriceScatteringBis}}
\end{figure}

\begin{figure}[!ht]
\centering
\includegraphics[width=0.9\textwidth]{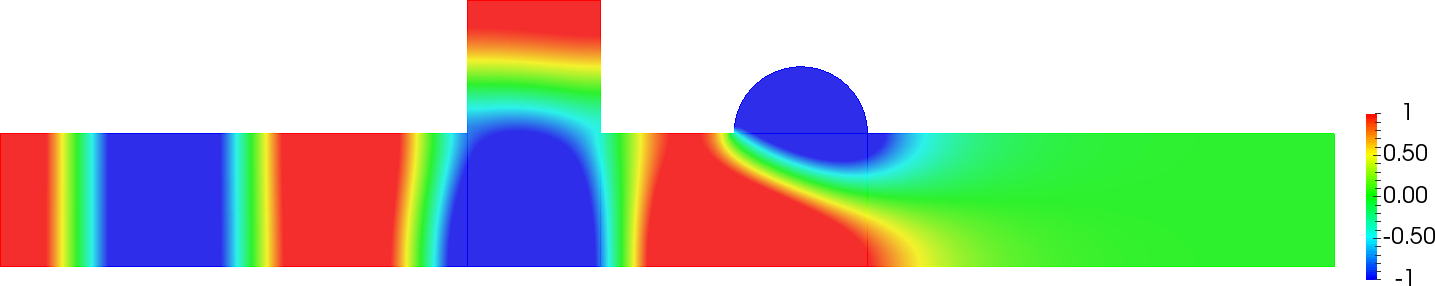}
\caption{Real part of $u_+$ for $\sqrt{\lambda}=2.4016$. In this setting, we have $T(\lambda)\approx0$. \label{FigTNullBis}}
\end{figure}

\newpage

\section{Concluding remarks}
In this note, we proved that during the Fano resonance phenomenon in monomode regime, without assumption of symmetry of the geometry, the transmission coefficient passes through zero. Physically, when the transmission coefficient is null, the energy of an incident wave propagating through the structure is completely backscattered. As already mentioned, everything presented here is also valid in higher dimension and with Dirichlet or periodic boundary conditions instead of Neumann ones. We considered a geometrical perturbation of the walls of the waveguide. We could also have worked with a penetrable inclusion placed in the waveguide. Then perturbing the material parameter, we would have obtained similar results. Importantly, the above analysis applies only in monomode regime, that is for our geometry when $\lambda^0$ belongs to $(0;\pi^2)$. It is not clear what happens in multimodal regime ($\lambda^0>\pi^2)$. Moreover, we assumed that $\lambda^0$ is a simple eigenvalue embedded in the continuous spectrum of the Neumann Laplacian. When $\lambda^0$ is not simple, the analysis has to be done.

\section*{Acknowledgments}
The research of S.A. Nazarov was supported by the grant No. 17-11-01003 of the Russian Science Foundation.

\bibliography{Bibli}

\def\cprime{$'$}
\begin{thebibliography}{10}

\bibitem{AbSh16}
{G.S.} Abeynanda and {S.P.} Shipman.
\newblock {Dynamic resonance in the hign-Q and near-monochromatic regime}.
\newblock In {\em MMET, IEEE}, pages 102--107, 2016.

\bibitem{AsPV00}
A.~Aslanyan, L.~Parnovski, and D.~Vassiliev.
\newblock Complex resonances in acoustic waveguides.
\newblock {\em Quart. J. Mech. Appl. Math.}, 53(3):429--447, 2000.

\bibitem{BeBL04}
E.~B{\'e}cache, {A.-S.} {Bonnet-Ben Dhia}, and G.~Legendre.
\newblock Perfectly matched layers for the convected helmholtz equation.
\newblock {\em SIAM J. Numer. Anal.}, 42(1):409--433, 2004.

\bibitem{Bera94}
{J.-P.} Berenger.
\newblock A perfectly matched layer for the absorption of electromagnetic
  waves.
\newblock {\em J. Comput. Phys.}, 114(2):185--200, 1994.

\bibitem{BoCP18}
{A.-S.} {Bonnet-Ben Dhia}, L.~Chesnel, and V.~Pagneux.
\newblock Trapped modes and reflectionless modes as eigenfunctions of the same
  spectral problem.
\newblock {\em Proc. R. Soc. A}, 474(2213):20180050, 2018.

\bibitem{CaLo07}
G.~Cattapan and P.~Lotti.
\newblock Fano resonances in stubbed quantum waveguides with impurities.
\newblock {\em Eur. Phys. J. B}, 60(1):51--60, 2007.

\bibitem{ChNa18}
L.~Chesnel and {S.A.} Nazarov.
\newblock Non reflection and perfect reflection via {Fano} resonance in
  waveguides.
\newblock {\em Comm. Math. Sci.}, 16(7):1779--1800, 2018.

\bibitem{ChNPSu}
L.~Chesnel, {S.A.} Nazarov, and V.~Pagneux.
\newblock Invisibility and perfect reflectivity in waveguides with finite
  length branches.
\newblock {\em SIAM J. Appl. Math.}, 78(4):2176--2199, 2018.

\bibitem{ChPaSu}
L.~Chesnel and V.~Pagneux.
\newblock Simple examples of perfectly invisible and trapped modes in
  waveguides.
\newblock {\em Quart. J. Mech. Appl. Math., in press}, 71(3):297--315, 2018.

\bibitem{ChPa19}
L.~Chesnel and V.~Pagneux.
\newblock From zero transmission to trapped modes in waveguides.
\newblock {\em J. Phys. A: Math. Theor.}, to appear, 2019.

\bibitem{DaPa98}
{E.B.} Davies and L.~Parnovski.
\newblock Trapped modes in acoustic waveguides.
\newblock {\em Quart. J. Mech. Appl. Math.}, 51(3):477--492, 1998.

\bibitem{DKLM07}
Y.~Duan, W.~Koch, {C.M.} Linton, and M.~McIver.
\newblock Complex resonances and trapped modes in ducted domains.
\newblock {\em J. Fluid. Mech.}, 571:119--147, 2007.

\bibitem{EMADAD08}
{E.H.} El~Boudouti, T.~Mrabti, H.~{Al-Wahsh}, B.~{Djafari-Rouhani}, A.~Akjouj,
  and L.~Dobrzynski.
\newblock {Transmission gaps and {Fano} resonances in an acoustic waveguide:
  analytical model}.
\newblock {\em J. Phys. Condens. Matter}, 20(25):255212, 2008.

\bibitem{Evan92}
{D.V.} Evans.
\newblock Trapped acoustic modes.
\newblock {\em IMA J. Appl. Math.}, 49(1):45--60, 1992.

\bibitem{EvLV94}
{D.V.} Evans, M.~Levitin, and D.~Vassiliev.
\newblock Existence theorems for trapped modes.
\newblock {\em J. Fluid. Mech.}, 261:21--31, 1994.

\bibitem{FaJo02}
S.~Fan and {J.D.} Joannopoulos.
\newblock Analysis of guided resonances in photonic crystal slabs.
\newblock {\em Phys. Rev. B}, 65(23):235112, 2002.

\bibitem{FaSJ03}
S.~Fan, W.~Suh, and {J.D.} Joannopoulos.
\newblock Temporal coupled-mode theory for the {F}ano resonance in optical
  resonators.
\newblock {\em J. Opt. Soc. Am. A}, 20(3):569--572, 2003.

\bibitem{Fano61}
U.~Fano.
\newblock Effects of configuration interaction on intensities and phase shifts.
\newblock {\em Physical Review}, 124(6):1866--1878, 1961.

\bibitem{HeKo08}
S.~Hein and W.~Koch.
\newblock Acoustic resonances and trapped modes in pipes and tunnels.
\newblock {\em J. Fluid. Mech.}, 605:401--428, 2008.

\bibitem{HeKN12}
S.~Hein, W.~Koch, and L.~Nannen.
\newblock {Trapped modes and {Fano} resonances in two-dimensional acoustical
  duct--cavity systems}.
\newblock {\em J. Fluid. Mech.}, 692:257--287, 2012.

\bibitem{HoNa09}
T.~{Hohage} and L.~{Nannen}.
\newblock Hardy space infinite elements for scattering and resonance problems.
\newblock {\em SIAM J. Numer. Anal.}, 47(2):972--996, 2009.

\bibitem{KaNa02}
{I.V.} Kamotski{\u\i} and {S.A.} Nazarov.
\newblock An augmented scattering matrix and exponentially decreasing solutions
  of an elliptic problem in a cylindrical domain.
\newblock {\em Zap. Nauchn. Sem. S.-Peterburg. Otdel. Mat. Inst. Steklov.
  (POMI)}, 264(Mat. Vopr. Teor. Rasprostr. Voln. 29):66--82, 2000.
\newblock (English transl.: J. Math. Sci. 2002. V. 111, N 4. P. 3657--3666).

\bibitem{Lee99}
{H.-W.} Lee.
\newblock Generic transmission zeros and in-phase resonances in time-reversal
  symmetric single channel transport.
\newblock {\em Phys. Rev. Lett.}, 82(11):2358, 1999.

\bibitem{LeKi01}
{H.-W.} Lee and {C.S.} Kim.
\newblock Effects of symmetries on single-channel systems: Perfect transmission
  and reflection.
\newblock {\em Phys. Rev. B}, 63(7):075306, 2001.

\bibitem{LiMc07}
{C.M.} Linton and P.~McIver.
\newblock Embedded trapped modes in water waves and acoustics.
\newblock {\em Wave motion}, 45(1):16--29, 2007.

\bibitem{LZMHN10}
B.~{Luk'yanchuk}, {N.I.} Zheludev, {S.A.} Maier, {N.J.} Halas, P.~Nordlander,
  H.~Giessen, and {C.T.} Chong.
\newblock The {Fano} resonance in plasmonic nanostructures and metamaterials.
\newblock {\em Nat. Mater.}, 9(9):707--715, 2010.

\bibitem{MiFK10}
{A.E.} Miroshnichenko, S.~Flach, and {Y.S.} Kivshar.
\newblock Fano resonances in nanoscale structures.
\newblock {\em Rev. Mod. Phys.}, 82(3):2257, 2010.

\bibitem{MrMK11}
A.E. Miroshnichenko, B.A. Malomed, and {Y.S.} Kivshar.
\newblock Nonlinearly {PT}-symmetric systems: Spontaneous symmetry breaking and
  transmission resonances.
\newblock {\em Phys. Rev. A}, 84(1):012123, 2011.

\bibitem{Naza10c}
{S.A.} Nazarov.
\newblock Sufficient conditions on the existence of trapped modes in problems
  of the linear theory of surface waves.
\newblock {\em J. Math. Sci.}, 167(5):713--725, 2010.

\bibitem{Naza11}
{S.A.} Nazarov.
\newblock Asymptotic expansions of eigenvalues in the continuous spectrum of a
  regularly perturbed quantum waveguide.
\newblock {\em Theor. Math. Phys.}, 167(2):606--627, 2011.

\bibitem{Naza11c}
{S.A.} Nazarov.
\newblock {Eigenvalues of the {L}aplace operator with the {N}eumann conditions
  at regular perturbed walls of a waveguide}.
\newblock {\em J. Math. Sci.}, 172(4):555--588, 2011.

\bibitem{Naza12}
{S.A.} Nazarov.
\newblock Enforced stability of an eigenvalue in the continuous spectrum of a
  waveguide with an obstacle.
\newblock {\em Comput. Math. and Math. Phys.}, 52(3):448--464, 2012.

\bibitem{Naza13}
{S.A.} Nazarov.
\newblock Enforced stability of a simple eigenvalue in the continuous spectrum
  of a waveguide.
\newblock {\em Funct. Anal. Appl.}, 47(3):195--209, 2013.

\bibitem{Naza13Gaps}
{S.A.} Nazarov.
\newblock Gaps and eigenfrequencies in the spectrum of a periodic acoustic
  waveguide.
\newblock {\em Acoust. Phys.}, 59(3):272--280, 2013.

\bibitem{Naza17}
{S.A.} Nazarov.
\newblock Almost standing waves in a periodic waveguide with resonator, and
  near-threshold eigenvalues.
\newblock {\em Algebra i Analiz}, 28(3):110--160, 2016.
\newblock (English transl.: Sb. Math. J. 2017. V. 28, N 3. P. 377--410).

\bibitem{Naza18}
{S.A.} Nazarov.
\newblock Enhancement and smoothing of near-threshold {Wood} anomalies in an
  acoustic waveguide.
\newblock {\em Acoust. Phys.}, 64(5):535--547, 2018.

\bibitem{NaPl94bis}
{S.A.} Nazarov and {B.A.} Plamenevski{\u\i}.
\newblock Selfadjoint elliptic problems: scattering and polarization operators
  on the edges of the boundary.
\newblock {\em Algebra i Analiz}, 6(4):157--186, 1994.
\newblock (English transl.: Sb. Math. J. 1995. V. 6, N 4. P. 839--863).

\bibitem{PoGP99}
{J.A.} Porto, {F.J.} {Garcia-Vidal}, and {J.B.} Pendry.
\newblock Transmission resonances on metallic gratings with very narrow slits.
\newblock {\em Phys. Rev. Lett.}, 83(14):2845, 1999.

\bibitem{Shao94}
{Z.-A.} Shao, W.~Porod, and {C.S.} Lent.
\newblock Transmission resonances and zeros in quantum waveguide systems with
  attached resonators.
\newblock {\em Phys. Rev. B}, 49(11):7453, 1994.

\bibitem{ShTu12}
{S.P.} Shipman and H.~Tu.
\newblock Total resonant transmission and reflection by periodic structures.
\newblock {\em SIAM J. Appl. Math.}, 72(1):216--239, 2012.

\bibitem{ShVe05}
{S.P.} Shipman and S.~Venakides.
\newblock Resonant transmission near nonrobust periodic slab modes.
\newblock {\em Phys. Rev. E}, 71(2):026611, 2005.

\bibitem{ShWe13}
{S.P.} Shipman and {A.T.} Welters.
\newblock Resonant electromagnetic scattering in anisotropic layered media.
\newblock {\em J. Math. Phys.}, 54(10):103511, 2013.

\bibitem{Urse51}
F.~Ursell.
\newblock Trapping modes in the theory of surface waves.
\newblock {\em {Proc. Camb. Philos. Soc.}}, 47:347--358, 1951.

\bibitem{Zhuk10}
{S.V.} Zhukovsky.
\newblock Perfect transmission and highly asymmetric light localization in
  photonic multilayers.
\newblock {\em Phys. Rev. A}, 81(5):053808, 2010.

\bibitem{Zwor99}
M.~Zworski.
\newblock Resonances in physics and geometry.
\newblock {\em Not. AMS}, 46(3):319--328, 1999.

\end{thebibliography}
\bibliographystyle{plain}

\end{document}